\documentclass[11pt]{article}

\usepackage[utf8]{inputenc}
\DeclareUnicodeCharacter{0308}{\"{}}

\usepackage{titling}

\usepackage{abstract}

\usepackage{titlesec}
\def\sectionfont{\sffamily\Large\bfseries\boldmath}
\def\subsectionfont{\sffamily\large\bfseries\boldmath}
\def\paragraphfont{\sffamily\normalsize\bfseries\boldmath}
\titleformat*{\section}{\sectionfont}
\titleformat*{\subsection}{\subsectionfont}
\titleformat*{\subsubsection}{\paragraphfont}
\titleformat*{\paragraph}{\paragraphfont}
\titleformat*{\subparagraph}{\paragraphfont}
\usepackage[small,labelfont={bf,sf}]{caption}  
\setlength{\abovecaptionskip}{0em}

\usepackage[margin=1in]{geometry} 
\date{}

\usepackage{nicefrac}       
\usepackage{xcolor}         

\usepackage[normalem]{ulem}
\usepackage{amsmath,amssymb,graphicx,amsthm}

\usepackage[hyphens]{url}
\newtheorem{proposition}{Proposition}
\newtheorem{assumption}{Assumption}
\newtheorem{theorem}{Theorem}
\newtheorem{lemma}{Lemma}
\newtheorem{remark}{Remark}
\DeclareMathOperator*{\argmin}{\arg\!\min}

\def\reals{\mathbb{R}}

\def\norm#1{\left\|#1\right\|}

\usepackage{todonotes}
\newcommand{\addcite}[0]{\ifthenelse{\boolean{showcomments}}
{\textcolor{purple}{(add cite(s)) }}{}}%

\usepackage{hyperref}
\usepackage{cleveref}
\usepackage{enumitem}
\usepackage{appendix}

\usepackage[
backend=bibtex,
style=numeric-comp,
sorting=none,
maxbibnames=12,
]{biblatex}
\addbibresource{refs.bib}

\title{\bfseries\sffamily
Safe Gradient Flow for Bilevel Optimization}

\author{
    \normalsize Sina Sharifi$^*$, Nazanin Abolfazli$^*$,
    Erfan Yazdandoost Hamedani$^\dag$, Mahyar Fazlyab$^\dag$
}
\begin{document}
\maketitle

\vspace{-1cm}
\begin{abstract}

Bilevel optimization is a key framework in hierarchical decision-making, where one problem is embedded within the constraints of another. In this work, we propose a control-theoretic approach to solving bilevel optimization problems. Our method consists of two components: a gradient flow mechanism to minimize the upper-level objective and a safety filter to enforce the constraints imposed by the lower-level problem. Together, these components form a safe gradient flow that solves the bilevel problem in a single loop. To improve scalability with respect to the lower-level problem's dimensions, we introduce a relaxed formulation and design a compact variant of the safe gradient flow. This variant minimizes the upper-level objective while ensuring the lower-level decision variable remains within a user-defined suboptimality. Using Lyapunov analysis, we establish convergence guarantees for the dynamics, proving that they converge to a neighborhood of the optimal solution. Numerical experiments further validate the effectiveness of the proposed approaches. Our contributions provide both theoretical insights and practical tools for efficiently solving bilevel optimization problems.

\end{abstract}
\renewcommand{\thefootnote}{}
\footnotetext{$^*$Equal Contribution, $^\dag$Equal Advising.}
\footnotetext{S. Sharifi and M. Fazlyab are with the Department of Electrical and Computer Engineering at Johns Hopkins University, Baltimore, MD 21218, USA. \
\tt\small \{sshari12, mahyarfazlyab\}@jhu.edu}
\footnotetext{N. Abolfazli, E. Yazdandoost Hamedani are with the Department of Systems and Industrial Engineering, The University of Arizona, Tucson, AZ 85721, USA. \
\tt\small \{nazaninabolfazli, erfany\}@arizona.edu}
\section{Introduction}
Bilevel optimization is crucial for addressing hierarchical decision-making problems that finds a plethora of applications in engineering \cite{pandvzic2018investments}, economics \cite{von1952theory}, transportation \cite{sharma2015iterative}, and machine learning \cite{finn2017model, rajeswaran2019meta,fei2006one,hong2020two,bengio2000gradient,hao2024bilevel,zhang2022revisiting}.
 A broad category of bilevel optimization problems can be written as optimization problems of the form
\begin{alignat}{2}\label{eq: bilevel task 1}\tag{BLO}
    & \min_{x \in \mathcal{X}} \ \ell(x) \! := \! f(x,y^\star(x)) \ 
    &\text{s.t.} \  y^\star(x) \! \in \! \argmin_{y \in \mathcal{C}(x)} ~g(x,y) ,
\end{alignat}
where $f,g: \mathbb R^n\times\mathbb R^m \to \mathbb R$ represent the upper-level and lower-level objective functions, and $\mathcal X\subseteq \mathbb R^n$, $\mathcal C(x)\subseteq \mathbb R^m$ are upper-level and lower-level constraint sets. Moreover, $\ell:\mathbb R^n\to \mathbb R$ denotes the implicit objective function. 
For simplicity in exposition, this paper only considers the case where both levels are unconstrained; we will defer the constrained case to future work.

Bilevel optimization problems are inherently non-convex and computationally demanding, primarily due to the interdependent coupling between the upper-level and lower-level problems. In this paper, we introduce a novel control-theoretic framework to solve bilevel problems. We start with a gradient-flow dynamical system that merely seeks to solve the upper-level problem, disregarding the constraints imposed by the lower-level problem. We will then leverage the notion of set invariance to design a safety filter, posed as a convex quadratic program (QP), that minimally modifies the dynamics of the gradient flow to efficiently enforce the constraints imposed by the optimality conditions of the lower-level optimization problem. The resulting filtered dynamical system will be \textit{anytime} safe with suitable initialization and is guaranteed to find near-optimal solutions to the bilevel problem under certain assumptions. We will establish a non-asymptotic convergence rate through Lyapunov analysis. See \Cref{fig:oveview} for an overview of the method.

The proposed safety filter requires matrix inversion. To avoid this computational burden for high-dimensional problems, we will also present a compact version of the safety filter that avoids matrix inversions altogether.  Utilizing Lyapunov analysis we prove the convergence of the dynamics to a neighborhood of the optimal solution under certain regularity assumptions.
Finally, we present numerical experiments and ablation studies to demonstrate the effectiveness of our proposed methods.

\begin{figure}[t]
    \centering
    \includegraphics[width=0.6\linewidth]{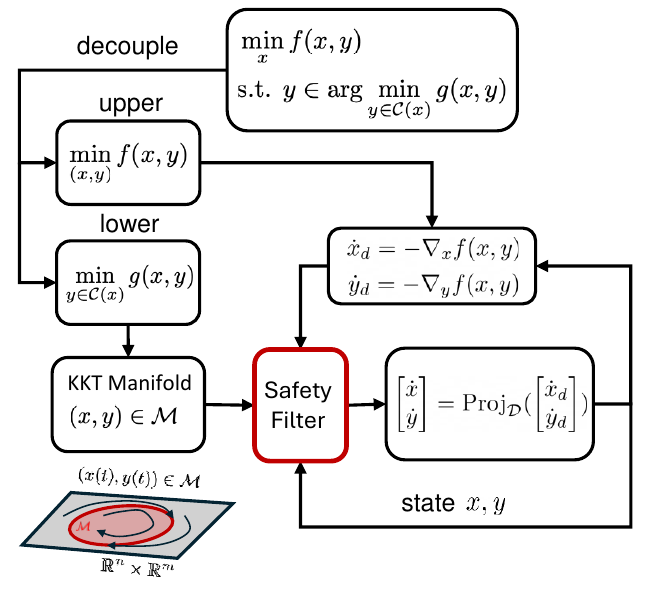}
    \caption{An overview of the proposed method. 
    The gradient flow attempts to minimize the upper-level objective, while the safety filter ensures that the constraints induced by the lower-level problem are satisfied.}
    \label{fig:oveview}
\end{figure}

\subsection{Related Work}
Bilevel optimization problems can often be reduced to single-level formulations by leveraging the optimality conditions of the lower-level problem \cite{hansen1992new}. However, large-scale or non-convex lower-level problems present significant challenges. Recent advances in gradient-based bilevel optimization methods primarily fall into two categories: approximate implicit differentiation (AID) \cite{pedregosa2016hyperparameter,domke2012generic,ghadimi2018approximation} and iterative differentiation (ITD) \cite{shaban2019truncated,Franceschi_ICML18,grazzi2020iteration}. More recent research has addressed challenges such as non-unique lower-level solutions \cite{sow2022constrained,chen2023bilevel,shen2024method,liu2022bome,xiao2023generalized} and the presence of lower-level constraints \cite{xiao2023alternating, khanduri2023linearly, xu2023efficient, kornowski2024first}.

Of particular relevance to our method, \cite{liu2022bome} employs dynamic barriers and first-order methods to solve bilevel problems. Barrier functions, widely used in control theory \cite{ames2016control, ames2019control, tan2024zero}, have also been adopted in optimization for ensuring safety \cite{muehlebach2022constraints, allibhoy2023control,allibhoy2021anytime}. The term safe gradient flow was introduced in \cite{allibhoy2021anytime, allibhoy2023control} in the context of solving nonlinear optimization problems using control barrier functions.  Our work builds on these concepts to develop a safe gradient flow framework tailored for bilevel optimization. Other approaches include orthogonal gradient projection \cite{schechtman2023orthogonal} and differential geometry-based updates \cite{tanabe1980geometric}, particularly in discrete-time settings.

While continuous-time optimization has seen extensive development, its application to bilevel problems remains less explored. \cite{chen2022single} establishes conditions for asymptotic convergence to stationary points of the upper-level problem while ensuring convergence of the lower-level solution. Two-timescale stochastic gradient descent in continuous time has been analyzed for almost sure convergence \cite{sharrock2023two}. Continuous-time methods are particularly advantageous in engineering, finance, and natural sciences, where continuous models align with high-frequency sampling \cite{yin1993continuous}. These methods also provide better approximations and mitigate issues like ill-conditioning \cite{salgado1988connection}, biased estimates \cite{chen2005least}, and divergence \cite{sirignano2017stochastic}.
\subsection{Notation}
An extended class $\mathcal{K}$ function is a function $\alpha:(-b, a) \mapsto \mathbb{R}$ for some $a, b > 0$ that is strictly increasing and satisfies $\alpha(0) = 0$. We denote $L_r^{-}(h) = \{x \in \mathbb{R}^n \mid h(x) \leq r \}$ as the $r$-sublevel set of the function $h(\cdot)$. 
We denote the Euclidean norm by $\|\cdot\|$ and $[x]_+:= \max\{0,x\}$. 
The function $h(\cdot)$ is called $L^h$-Lipschitz if for any $z, z'$ we have  $\|h(z) - h(z')\| \leq L^h \|z - z'\|$. 
Given a map  $F(x): \mathbb{R}^n  \to \mathbb{R}^m$, its Jacobian matrix is denoted by \( D_xF(x)\in\mathbb{R}^{m\times n} \).
Furthermore, we denote the Euclidean projection of $\bar{x} \in \mathbb{R}^n$ onto a closed convex set $\mathcal{D} \subseteq \mathbb{R}^n$ as $\mathrm{Proj}_\mathcal{D}(\bar{x}):=\argmin_{x\in\mathcal{D}}\|x-\bar{x}\|^2$.

\subsection{Preliminaries}
Consider the following regularity assumptions. 

\begin{assumption}\label{assumption:upperlevel}
The upper-level objective $f$ is a continuously differentiable function with bounded gradient, i.e., there exists $C_x^f,C_y^f>0$ such that $\|\nabla_x f(x,y)\|\leq C_x^f$ and $\|\nabla_y f(x,y)\|\leq C_y^f$ for all $(x,y)\in\mathbb{R}^n\times \mathbb{R}^m$.
Furthermore, $\nabla_x f(x,y)$ and $\nabla_y f(x,y)$ are $L_{x}^f$ and $L^f_{y}$-Lipschitz continuous, respectively, for $(x,y) \in \reals^n \times \reals^m$. 
\end{assumption}
\begin{assumption}\label{assumption:lowerlevel}
 The lower-level objective function $g(x,\cdot)$ is twice continuously differentiable and
 \begin{enumerate}[leftmargin=*, label=\roman*., ref=\theassumption-\roman*]
    \item\label{assumption:lowerlevel-1} $\mu_g$-strongly convex for all $x \in \reals^n$.
    \item\label{assumption:lowerlevel-2} $\nabla_y g(x,\cdot)$, and $\nabla_y g(\cdot,y)$, are $L_{yy}^g$- and $L^g_{yx}$-Lipschitz continuous for all $x\in \reals^n$, and $ y \in \reals^m$, respectively.
    \item\label{assumption:lowerlevel-3} $\nabla^2_{yx} g(x,y)$ and $\nabla^2_{yy} g(x,y)$ are $C_{yx}^{g}$ and $C_{yy}^{g}$-Lipschitz continuous, respectively, for $(x,y) \in \reals^n \times \reals^m$.
\end{enumerate}
\end{assumption}

Under Assumption \ref{assumption:lowerlevel}, the optimal solution map \( y^\star(x) \) of the lower-level problem is single-valued and continuously differentiable. This conclusion follows from the optimality condition of the lower-level problem, \( \nabla_y g(x, y^\star(x)) = 0 \) for all \( x \in \mathbb{R}^n \). Since the Hessian matrix \( \nabla_{yy}^2 g(x, y^\star(x)) \) is positive definite (and therefore non-singular), the implicit function theorem \cite{Rudin1976} guarantees that the map \( x \mapsto y^\star(x) \) is unique and continuously differentiable. Moreover, the Jacobian of the solution map, \( D_x y^\star(x) \), can be derived by differentiating the implicit equation with respect to \( x \),
\[
\nabla_{yx}^2 g(x, y^\star(x)) + \nabla_{yy}^2 g(x, y^\star(x)) D_x y^\star(x) = 0, \quad \forall x \in \mathbb{R}^n.
\]
Using this equation, the gradient of the implicit objective function $\ell(\cdot)$ (defined in \eqref{eq: bilevel task 1}), also known as the hyper-gradient, can be calculated as follows,
\begin{subequations}\label{eq:grad-implicit}
\begin{align}
    &\nabla \ell(x) = \nabla_x f(x,y^\star(x)) - \nabla_{yx}^2 g(x,y^\star(x))^\top v^\star(x), \\
    &\nabla_{yy}^2 g(x,y^\star(x)) v^\star(x) = \nabla_y f(x,y^\star(x)).
\end{align}
\end{subequations}
Calculating $\nabla \ell(x)$ requires the exact computation of $y^\star(x)$. To avoid this, it is common to consider the following surrogate map to estimate \eqref{eq:grad-implicit} by replacing the optimal lower-level solution with an approximated solution $y$,
\begin{subequations}\label{eq:grad-surrogate}
\begin{align}
    &F(x,y) :=  \nabla_x f(x,y) - \nabla_{yx}^2 g(x,y)^\top v(x,y),\\
    &\nabla_{yy}^2 g(x,y) v(x,y) = \nabla_y f(x,y).
\end{align}
\end{subequations}
Under suitable assumptions on the upper-level objective function $f$, this estimate can be controlled by the distance between the optimal solution $y^\star(x)$ and the approximated solution $y$.

\begin{lemma}\label{lem:error-grad-ell}
Under Assumptions~\ref{assumption:upperlevel} and \ref{assumption:lowerlevel}, there exists $M_1>0$ such that 
$
        \norm{\nabla \ell(x) - F(x,y)} \leq M_1 \norm{y-y^\star(x)}. \notag
    $ for all $x \in \mathbb{R}^n, y \in \mathbb{R}^m$.
\end{lemma}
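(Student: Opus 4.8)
The plan is to estimate $\nabla\ell(x) - F(x,y)$ directly by comparing the exact hyper-gradient in \eqref{eq:grad-implicit} with its surrogate in \eqref{eq:grad-surrogate}, term by term. Writing $z^\star := (x, y^\star(x))$ and $z := (x, y)$, I would decompose
\[
\nabla\ell(x) - F(x,y) = \big(\nabla_x f(z^\star) - \nabla_x f(z)\big) - \big(\nabla_{yx}^2 g(z^\star)^\top v^\star(x) - \nabla_{yx}^2 g(z)^\top v(x,y)\big).
\]
The first group is bounded by $L_x^f \norm{y - y^\star(x)}$ using the Lipschitz continuity of $\nabla_x f$ from Assumption~\ref{assumption:upperlevel}. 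For the second group, adding and subtracting $\nabla_{yx}^2 g(z^\star)^\top v(x,y)$ and applying the triangle inequality splits it into $\norm{\nabla_{yx}^2 g(z^\star)}\cdot\norm{v^\star(x) - v(x,y)}$ plus $\norm{\nabla_{yx}^2 g(z^\star) - \nabla_{yx}^2 g(z)}\cdot\norm{v(x,y)}$.

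Next I would collect the elementary uniform bounds. Since $\nabla_y g(\cdot, y)$ is $L_{yx}^g$-Lipschitz (Assumption~\ref{assumption:lowerlevel-2}), the operator norm of the Jacobian satisfies $\norm{\nabla_{yx}^2 g(x,y)} \le L_{yx}^g$ for all $(x,y)$; Assumption~\ref{assumption:lowerlevel-3} gives $\norm{\nabla_{yx}^2 g(z^\star) - \nabla_{yx}^2 g(z)} \le C_{yx}^g \norm{y - y^\star(x)}$. From $\mu_g$-strong convexity (Assumption~\ref{assumption:lowerlevel-1}), $\nabla_{yy}^2 g(x,y) \succeq \mu_g I$, so its inverse has operator norm at most $1/\mu_g$; combined with $\norm{\nabla_y f} \le C_y^f$ this yields $\norm{v(x,y)} \le C_y^f / \mu_g$, and the same bound holds for $v^\star(x)$.

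The main work is the bound on $\norm{v^\star(x) - v(x,y)}$. Setting $A^\star := \nabla_{yy}^2 g(z^\star)$, $A := \nabla_{yy}^2 g(z)$, $b^\star := \nabla_y f(z^\star)$, $b := \nabla_y f(z)$, so that $v^\star(x) = (A^\star)^{-1} b^\star$ and $v(x,y) = A^{-1} b$, I would write
\[
v^\star(x) - v(x,y) = (A^\star)^{-1}(b^\star - b) + \big((A^\star)^{-1} - A^{-1}\big)b
\]
and use the resolvent identity $(A^\star)^{-1} - A^{-1} = (A^\star)^{-1}(A - A^\star)A^{-1}$. The first summand is at most $\tfrac{L_y^f}{\mu_g}\norm{y - y^\star(x)}$ by Lipschitzness of $\nabla_y f$, and the second is at most $\tfrac{1}{\mu_g^2}\norm{A - A^\star}\,\norm{b} \le \tfrac{C_{yy}^g C_y^f}{\mu_g^2}\norm{y - y^\star(x)}$ using Assumption~\ref{assumption:lowerlevel-3} and $\norm{\nabla_y f}\le C_y^f$. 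Assembling everything gives the claim with
\[
M_1 = L_x^f + \frac{L_{yx}^g L_y^f}{\mu_g} + \frac{L_{yx}^g C_{yy}^g C_y^f}{\mu_g^2} + \frac{C_{yx}^g C_y^f}{\mu_g}.
\]
I expect the only mildly delicate point to be handling the two ``difference of products'' --- the bilinear term $\nabla_{yx}^2 g(z^\star)^\top v^\star - \nabla_{yx}^2 g(z)^\top v$ and the inverse-Hessian difference --- cleanly; both become routine once the resolvent identity and the uniform operator-norm bounds above are in hand.
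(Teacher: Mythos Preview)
Your proposal is correct and follows the standard decomposition-and-perturbation argument for this kind of bound: split off the $\nabla_x f$ difference, add and subtract in the bilinear term $\nabla_{yx}^2 g^\top v$, and control the $v^\star - v$ piece via the resolvent identity together with the uniform bounds $\norm{\nabla_{yx}^2 g}\le L_{yx}^g$, $\norm{(\nabla_{yy}^2 g)^{-1}}\le 1/\mu_g$, $\norm{\nabla_y f}\le C_y^f$. The paper does not spell out its own proof but defers to \cite[Lemma~4.1]{abolfazli2023inexact}, whose argument is precisely of this type, so your route matches what the paper has in mind.
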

\begin{proof} The proof follows that of Lemma 4.1 of \cite{abolfazli2023inexact}.
\end{proof}

\section{Safe Gradient Flow for Solving Bilevel Optimization Problems}\label{sec:Invariance}
Under Assumption \ref{assumption:lowerlevel-1} (strong convexity), the lower-level solution \( y^\star(x) \) is unique and differentiable, reducing the bilevel optimization problem to an equivalent constrained, non-convex single-level optimization problem:
\begin{alignat}{2}\label{eq: bilevel task 1 b}
    & \min_{x,y} \ f(x,y) \quad 
    &\text{s.t.} \quad (x,y) \in \mathcal{M} = \{(x,y) \mid \nabla_y g(x,y)=0\}. 
\end{alignat}
In our proposed design paradigm, the upper-level optimizer chooses a base dynamical system to update both upper and lower decision variables without considering the lower-level problem. Choosing the gradient flow, we can write
\begin{align}\label{eq: gradient flow upper level} \tag{GF}
    \dot{x} = - \nabla_x f(x,y), \quad
    \dot{y} = - \nabla_y f(x,y). \notag
\end{align}
This dynamical system aims to minimize the upper-level objective function while disregarding the constraints \((x, y) \in \mathcal{M}\) imposed by the lower-level optimization problem. To take the lower-level problem into account, we propose to design a ``\emph{safety filter}'' that \emph{minimally} modifies the velocities $\dot{x}$ and $\dot{y}$ such that trajectories of $x$ and $y$ are attracted to and remain on $\mathcal{M}$. More explicitly, suppose the velocity vectors $(\dot{x},\dot{y})$ are enforced to satisfy the following condition for a given $\alpha>0$,
\begin{align} \label{eq: lower level invariance unconstrained}
    \frac{d}{dt} \nabla_y g(x,y) + \alpha \nabla_y g(x,y)=0 \implies \nabla_{yx}^2 g(x,y) \dot{x} + \nabla_{yy}^2 g(x,y) \dot{y} + \alpha \nabla_y g(x,y) = 0.
\end{align}
With this affine constraint on the velocity vectors, off-manifold trajectories are \emph{contracted} towards the manifold \(\mathcal{M}\), while on-manifold trajectories remain within \(\mathcal{M}\). Together, \eqref{eq: lower level invariance unconstrained} ensures that \(\mathcal{M}\) is \emph{exponentially stable and forward invariant}.

To ensure the gradient flow dynamics \eqref{eq: gradient flow upper level} satisfy \eqref{eq: lower level invariance unconstrained}, we propose the following convex QP to compute the closest velocity to the gradient flow that guarantees \(\mathcal{M}\) is exponentially stable and forward invariant,
\begin{align}\label{eq:convex QP problem} \tag{SF}
    (\dot{x},\dot{y}) =\argmin_{(\dot{x}_d,\dot{y}_d)}~ &\frac{1}{2}\|\dot{x}_d+\nabla_x f(x,y)\|^2  + \frac{1}{2}\|\dot{y}_d+\nabla_y f(x,y)\|^2\\
    \text{s.t.}\quad &\nabla_{yx}^2 g(x,y) \dot{x}_d + \nabla_{yy}^2 g(x,y) \dot{y}_d + \alpha \nabla_y g(x,y) = 0. \notag
\end{align}
This QP admits a unique closed-form solution, which we call \emph{safe gradient flow (SGF)}.
\begin{align} \label{eq: gradient flow upper level safe} \tag{SGF}
    \dot{x} &= -\nabla_x f(x,y) - {\nabla_{yx}^2 g(x,y)^\top \lambda(x,y)} \\ \dot{y} &= -\nabla_y f(x,y) - {\nabla_{yy}^2 g(x,y)^\top \lambda(x,y)} \notag
\end{align}
Here, \(\lambda(x, y)\) is the vector of dual variables for the QP, with the following closed-form expression (notation simplified by omitting dependence on \(x\) and \(y\)),
\begin{align}\label{eq:lambda1}
\lambda(x,y) \!= \!-\! \left(\nabla_{yx}^2 g\nabla_{yx}^2 g^\top \! + \!(\nabla_{yy}^2 g)^2\right)^{-1} (\nabla_{yx}^2 g \nabla_x f\! +\! \nabla_{yy}^2 g \nabla_y f\! -\! \alpha \nabla_{y}g)
\end{align}
Equation \eqref{eq: gradient flow upper level safe} comprises two terms: a negative gradient to reduce the upper-level objective and a constraint-aware correction to ensure forward invariance of the constraint set \(\mathcal{M}\). Before proceeding with the convergence analysis of \eqref{eq: gradient flow upper level safe}, we first make a few important remarks:
\begin{enumerate}[leftmargin=*]
    \item The right-hand side of \eqref{eq: gradient flow upper level safe} is Lipschitz continuous, guaranteeing that the ODE is well-posed and admits a unique solution. (See \Cref{Appendix:add} \Cref{rm:Lip}.)
    \item The equilibrium of \eqref{eq: gradient flow upper level safe} satisfies the Karush-Kuhn-Tucker (KKT) conditions of \eqref{eq: bilevel task 1 b}, ensuring that the ODE can solve the bilevel problem as intended. (See \Cref{Appendix:add} \Cref{rm:Equil}.)
    \item Instead of projecting the decision variables \((x, y)\), \eqref{eq:convex QP problem} projects the velocity vectors. This subtle distinction is significant: velocity projection results in a convex QP even for non-convex sets (here \(\mathcal{M}\)).
    \item The QP \eqref{eq:convex QP problem} is guaranteed to be feasible if the matrix \( [\nabla_{yx} g \ \nabla_{yy} g] \) has full rank. This condition is less restrictive than Assumption \ref{assumption:lowerlevel-1}, as the strong convexity of \( g \) (i.e., \( \nabla_{yy}^2 g(x, y) \succeq \mu_g I \)) is sufficient to ensure that \( [\nabla_{yx} g \ \nabla_{yy} g] \) is full rank.
\end{enumerate} 

\begin{theorem}[Convergence of \eqref{eq: gradient flow upper level safe}]\label{thm:Original}
    Suppose Assumptions \ref{assumption:upperlevel} and \ref{assumption:lowerlevel} hold, and let $x(t),y(t)$ be the solution of \eqref{eq: gradient flow upper level safe} for $\alpha>0$. 
    Define the Lyapunov function
    $$
    \mathcal{E}(t) = f(x(t),y(t)) - f^* + \beta \|\nabla_y g(x(t), y(t))\| + c\int_{0}^t \|F(x(\tau),y(\tau))\|^2 d\tau.
    $$
   for some $\beta, c>0$, where $f^\star$ is the global optimal value of \ref{eq: bilevel task 1 b}. Then we have that 
    \begin{enumerate}
        \item The trajectories will be \emph{contracted} towards $\mathcal M$ exponentially fast,
        $$\|\nabla_y g(x(t),y(t)) \| = e^{-\alpha t} \|\nabla_yg(x(0),y(0))\|.$$
        \item There exists $\beta, c >0$ such that $\dot{\mathcal{E}}(t)  \leq  0$ for all $t\geq 0$. In particular,
        \begin{align}    
        \frac{1}{t}\int_{0}^t \|\nabla \ell (x(\tau))\|^2 d\tau 
        \leq &\frac{2\mathcal{E}(0)}{ct}+ \frac{1 - e^{-2\alpha t}}{\alpha t} \frac{M_1^2}{\mu_g^2} \|\nabla_yg(x(0),y(0))\|^2. \notag
        \end{align}
    \end{enumerate}
\end{theorem}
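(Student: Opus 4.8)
The plan is to treat the two parts in sequence: part~1 is immediate from the construction of the filter, and part~2 rests on differentiating $\mathcal E$ along the trajectory and establishing one linear-algebra inequality.

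\emph{Part 1.} The equality constraint that $(\dot x,\dot y)$ satisfies in \eqref{eq:convex QP problem} is, by the chain rule and $g\in C^2$, exactly $\frac{d}{dt}\nabla_y g(x(t),y(t))+\alpha\nabla_y g(x(t),y(t))=0$. Hence $z(t):=\nabla_y g(x(t),y(t))$ solves the linear ODE $\dot z=-\alpha z$, so $z(t)=e^{-\alpha t}z(0)$; taking norms gives the first claim, and in particular $t\mapsto\|z(t)\|$ is differentiable with $\frac{d}{dt}\|z(t)\|=-\alpha\|z(t)\|$, which sidesteps any nonsmoothness of the norm at zeros of $z$.

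\emph{Part 2.} I would first compute $\frac{d}{dt}f(x(t),y(t))=\langle\nabla_x f,\dot x\rangle+\langle\nabla_y f,\dot y\rangle$ and substitute \eqref{eq: gradient flow upper level safe}–\eqref{eq:lambda1}. Writing $A:=\nabla_{yx}^2 g$, $B:=\nabla_{yy}^2 g\succeq\mu_g I$, $H:=AA^\top+B^2$ and $b:=A\nabla_x f+B\nabla_y f$, this produces
\[
\frac{d}{dt}f = -\|\nabla_x f\|^2-\|\nabla_y f\|^2 + b^\top H^{-1}b - \alpha\,b^\top H^{-1}\nabla_y g .
\]
The crucial estimate is $\|\nabla_x f\|^2+\|\nabla_y f\|^2-b^\top H^{-1}b\ge\gamma\|F\|^2$ for some $\gamma\in(0,1]$. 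Since $[A\ B][A\ B]^\top=H$, the matrix $P:=[A\ B]^\top H^{-1}[A\ B]$ is the orthogonal projector onto $\mathrm{range}([A\ B]^\top)\subseteq\reals^{n+m}$, so with $z:=(\nabla_x f,\nabla_y f)$ the left-hand side equals $z^\top(I-P)z$, the squared norm of the projection of $z$ onto $\{(s,t):As+Bt=0\}$. Computing that projection explicitly gives the identity $z^\top(I-P)z=F^\top(I+A^\top B^{-2}A)^{-1}F$ (recall $F=\nabla_x f-A^\top B^{-1}\nabla_y f$ from \eqref{eq:grad-surrogate}), and since $I\preceq I+A^\top B^{-2}A\preceq(1+(L_{yx}^g)^2/\mu_g^2)I$ we obtain $z^\top(I-P)z\ge\gamma\|F\|^2$ with $\gamma=\mu_g^2/(\mu_g^2+(L_{yx}^g)^2)$. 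For the cross term, $H\succeq B^2\succeq\mu_g^2I$ gives $\|H^{-1}\|\le\mu_g^{-2}$, and $\|b\|\le L_{yx}^gC_x^f+L_{yy}^gC_y^f=:C_b$, so $|\alpha b^\top H^{-1}\nabla_y g|\le\alpha C_b\mu_g^{-2}\|\nabla_y g\|$. Combining with $\frac{d}{dt}(\beta\|\nabla_y g\|)=-\alpha\beta\|\nabla_y g\|$ from part~1 and the integral term of $\mathcal E$,
\[
\dot{\mathcal E}(t)\le (c-\gamma)\|F\|^2 + \alpha(C_b\mu_g^{-2}-\beta)\|\nabla_y g\| ,
\]
which is $\le 0$ as soon as $c\le\gamma$ and $\beta\ge C_b\mu_g^{-2}$.

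\emph{From $\dot{\mathcal E}\le 0$ to the rate.} I would lower-bound $\mathcal E(t)$: strong convexity gives $\|y-y^\star(x)\|\le\mu_g^{-1}\|\nabla_y g(x,y)\|$, and $\|\nabla_y f\|\le C_y^f$ gives $f(x,y)\ge f(x,y^\star(x))-C_y^f\|y-y^\star(x)\|\ge f^\star-(C_y^f/\mu_g)\|\nabla_y g(x,y)\|$ (using $f(x,y^\star(x))\ge f^\star$). Imposing additionally $\beta\ge C_y^f/\mu_g$ yields $\mathcal E(t)\ge c\int_0^t\|F(x(\tau),y(\tau))\|^2 d\tau\ge 0$, so $\mathcal E(t)\le\mathcal E(0)$ gives $\int_0^t\|F\|^2 d\tau\le\mathcal E(0)/c$. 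Finally, Lemma~\ref{lem:error-grad-ell} with strong convexity gives $\|\nabla\ell(x)\|^2\le 2\|F(x,y)\|^2+2(M_1^2/\mu_g^2)\|\nabla_y g(x,y)\|^2$; integrating, using $\int_0^t\|\nabla_y g(x(\tau),y(\tau))\|^2 d\tau=\frac{1-e^{-2\alpha t}}{2\alpha}\|\nabla_y g(x(0),y(0))\|^2$ from part~1, and dividing by $t$ reproduces exactly the stated inequality. The main obstacle is the linear-algebra identity $\|\nabla_x f\|^2+\|\nabla_y f\|^2-b^\top H^{-1}b=F^\top(I+A^\top B^{-2}A)^{-1}F$ and its lower bound $\gamma\|F\|^2$; everything else is the chain rule, Cauchy--Schwarz, and bookkeeping of constants, with only a minor check that $f(x(t),y(t))$ and $\|\nabla_y g(x(t),y(t))\|$ are differentiable in $t$ (which follows from well-posedness of \eqref{eq: gradient flow upper level safe} together with part~1).
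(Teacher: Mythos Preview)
Your proof is correct and in places more careful than the paper's own. The overall strategy (differentiate $\mathcal E$, isolate a $\|F\|^2$ term, absorb the cross term with $\beta$, then invoke Lemma~\ref{lem:error-grad-ell}) is the same, but the key step differs. The paper substitutes $\nabla_x f=-\dot x-\nabla_{yx}^2g^\top\lambda$ and $\nabla_y f=-\dot y-\nabla_{yy}^2g\,\lambda$ to obtain $\dot{\mathcal E}=-\|\dot x\|^2-\|\dot y\|^2-\alpha\beta\|\nabla_y g\|+\alpha\lambda^\top\nabla_y g+c\|F\|^2$, and then uses an auxiliary inequality (Lemma~\ref{lem:error-grad-ell-fo} with $d=-\lambda$) plus Young to get $\|F\|^2\le c^{-1}(\|\dot x\|^2+\|\dot y\|^2)$ with the same $c=\mu_g^2/(\mu_g^2+(L_{yx}^g)^2)$; the cross term is handled by bounding $\|\lambda\|$ directly, which forces their $\beta$ to depend on $\alpha$ and the initial data. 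Your projection identity $\|\nabla_x f\|^2+\|\nabla_y f\|^2-b^\top H^{-1}b=F^\top(I+A^\top B^{-2}A)^{-1}F$ is sharper (an equality rather than a Young-type bound) and exposes the geometry explicitly; because you separate the $\alpha\nabla_y g$ piece from $b$ before bounding, your $\beta$ is a universal constant. You also close a gap the paper leaves open: the paper asserts $f(x(t),y(t))-f^\star\ge 0$ to lower-bound $\mathcal E(t)$, which need not hold off $\mathcal M$, whereas your additional requirement $\beta\ge C_y^f/\mu_g$ is precisely what makes $\mathcal E(t)\ge c\int_0^t\|F\|^2\,d\tau$ rigorous.
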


\begin{proof}
    See \Cref{proof:thmSGF}.   
\end{proof}

\section{Inversion-free Safe Gradient Flow}\label{sec:InversionFree}
The QP \eqref{eq:convex QP problem} has a closed-form solution; however, computing it involves inverting an \( m \times m \) matrix (where \( m \) is the dimension of \( y \)) within the dual variable \( \lambda(x, y) \). This process becomes computationally expensive for high-dimensional lower-level problems. To mitigate the scalability challenge, we start with the following equivalent single-level reduction of the bilevel problem
\begin{alignat}{2}\label{eq: bilevel task 2 a}
    & \min_{(x,y)} \ f(x,y) \quad 
    &\text{s.t.} \quad (x,y) \in \mathcal{M} = \{(x,y) \mid  h(x,y):=\|\nabla_y g(x,y)\|^2=0\}.
\end{alignat}
This reformulation reduces \(m\) nonlinear equality constraints to a single nonlinear equality constraint. Using a similar approach as before, we can minimally perturb the gradient flow of the upper-level objective to enforce the invariance condition, yielding the following convex QP:
\begin{align}\label{eq:convex QP problem 1}
\begin{split}
    (\dot{x},\dot{y})=\argmin_{(\dot{x}_d,\dot{y}_d)}~ &\frac{1}{2}\|\dot{x}_d+\nabla_x f(x,y)\|^2  + \frac{1}{2}\|\dot{y}_d+\nabla_y f(x,y)\|^2\\
    \text{s.t.}\quad &
    \nabla_x h(x,y)^\top \dot{x}_d \!+\! \nabla_y h(x,y)^\top \dot{y}_d \!+\! \alpha h(x,y) = 0.
\end{split}
\end{align}
The feasible set of \eqref{eq:convex QP problem 1} forms a hyperplane as long as \(h(x, y) \neq 0\).  Solving the QP
, we obtain
\begin{equation} \label{eq: gradient flow upper level safe 2} 
\begin{aligned}
    \dot{x} &= -\nabla_x f(x,y) - \lambda(x,y)\nabla_{x}h(x,y), \\
    \dot{y} &= -\nabla_y f(x,y) - \lambda(x,y)\nabla_y h(x,y), \\
    \lambda(x,y)\!&=  \frac{-\nabla_x h(x,y)^\top \nabla_x f(x,y)\! -\! \nabla_{y}h(x,y)^\top \nabla_y f(x,y)\! +\! \alpha h(x,y)}{\|\nabla_{x}h(x,y)\|^2 \! + \! \|\nabla_{y}h(x,y)\|^2},
\end{aligned}
\end{equation}
where $\lambda(x,y)$ is the dual variable of \eqref{eq:convex QP problem 1}. When $h(x,y)=0$, the feasible set spans the full space \(\mathbb{R}^{m+n}\), resulting in the ODE $\dot{x}=-\nabla_x f(x,y)$ and $\dot{y}=-\nabla_y f(x,y)$. Thus, we can extend the definition of $\lambda(x,y)$ on $\mathcal{M}$ by setting $\lambda(x,y)=0$. 

The dynamics in \eqref{eq: gradient flow upper level safe 2} eliminate the matrix inversion required in \eqref{eq:lambda1}, significantly improving scalability. However, this efficiency introduces a challenge: \((\dot{x}, \dot{y})\) becomes discontinuous at points where \(h(x, y) = 0\), and \(\lambda\) becomes unbounded compared to \eqref{eq: gradient flow upper level safe}. In the next subsection, we propose an alternative approach that avoids this discontinuity entirely.

\subsection{Relaxation of the KKT Conditions}
Observing that the right-hand side of \eqref{eq: gradient flow upper level safe 2} is discontinuous on $\mathcal{M}$, we consider the following relaxation of the compact formulation in \eqref{eq: bilevel task 2 a},
\begin{align}\label{eq:bilevel-approx}
    \min_{x,y}~f(x,y)\quad 
    \text{s.t.} \quad (x,y) \in L_{\varepsilon^2}^{-}(h): = \{(x,y) \mid h(x,y) -\varepsilon^2 \leq 0 \},
\end{align}
where $\varepsilon>0$ is an arbitrary tolerance parameter. This relaxation is practical, as exact lower-level solutions are often unrealistic. Now to ensure forward invariance for the constraint set of \eqref{eq:bilevel-approx}, we  use a barrier-like inequality used in safety-critical control theory and specifically control barrier functions, e.g., \cite{ames2019control}, as follows,
    \begin{align} \label{eq: invariance condition}
    \frac{d}{dt} (h(x,y) \!-\! \varepsilon^2) \!+\! \alpha (h(x,y) \!-\! \varepsilon^2) \leq 0.
    \end{align}
    Projecting the gradient flow dynamics onto this half-space constraint, we obtain the following QP-based relaxed safety filter (RXSF),
    \begin{alignat}{2} \label{eq: first-order lower-level dynamics 2} \tag{RXSF}
    (\dot{x},\dot{y})=&\argmin_{(\dot{x}_d,\dot{y}_d)}~ &&\frac{1}{2}\|\dot{x}_d+\nabla_x f(x,y)\|^2  + \frac{1}{2}\|\dot{y}_d+\nabla_y f(x,y)\|^2\\
    &\text{s.t.}\quad && \nabla_x h(x,y)^\top \dot{x}_d \!+\! \nabla_y h(x,y)^\top \dot{y}_d \!+\! \alpha (h(x,y) - \varepsilon^2) \leq 0.\notag    
    \end{alignat}
This projection has the following closed-form solution, where $\lambda$ is  the dual variable in \eqref{eq: first-order lower-level dynamics 2} (see \Cref{derivation}),
\begin{align}\label{eq:projectionXY}\tag{RXGF}
        \begin{split}
            \dot{x} &= -\nabla_x f(x,y) - \lambda(x,y) \nabla_x h(x,y),
            \\
            \dot{y} &= -\nabla_y f(x,y) - \lambda(x,y) \nabla_y h(x,y), \\
            \lambda(x,y) \!&=\! 
            \frac{\left[-\nabla_x h^\top \nabla_x f \!-\! \nabla_y h^\top \nabla_y f \!+\! \alpha (h - \varepsilon^2)\right]_{+}}
            {\|\nabla_x h\|^2 + \|\nabla_y h\|^2}. \notag
        \end{split} 
    \end{align}
We summarize the essential properties of \eqref{eq:projectionXY} in the following proposition.
\begin{proposition}\label{thm:epsilon feasible} \normalfont
    The following statements are true for \eqref{eq: first-order lower-level dynamics 2}:
    \begin{enumerate}[label=(\roman*), ref=\theproposition-\roman*]
        \item\label{prop:Lip} The right-hand side of \eqref{eq:projectionXY} is Lipschitz continuous, ensuring the uniqueness of the solutions.
        \item\label{prop:Feas} If the initial condition satisfies \((x(0), y(0)) \in L_{\varepsilon^2}^{-}(h) \), then \( (x(t), y(t)) \in L_{\varepsilon^2}^{-}(h)\) for all \(t \geq 0\). 
        \item\label{prop:KKT} The equilibrium of \eqref{eq: first-order lower-level dynamics 2} recovers the solution of \eqref{eq:bilevel-approx}.
        \item\label{prop:feas} The QP \eqref{eq: first-order lower-level dynamics 2} is always feasible.
        \end{enumerate}
\end{proposition}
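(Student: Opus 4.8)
The plan is to establish each of the four items in \Cref{thm:epsilon feasible} in turn, since they are largely independent.

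For \textbf{(i), Lipschitz continuity of \eqref{eq:projectionXY}}, I would argue that $h(x,y) = \|\nabla_y g(x,y)\|^2$ has gradient $\nabla_x h = 2\nabla_{yx}^2 g^\top \nabla_y g$ and $\nabla_y h = 2\nabla_{yy}^2 g^\top \nabla_y g$, which under Assumption~\ref{assumption:lowerlevel} are locally Lipschitz and, importantly, the denominator $\|\nabla_x h\|^2 + \|\nabla_y h\|^2 = 4\|\nabla_{yx}^2 g^\top \nabla_y g\|^2 + 4\|\nabla_{yy}^2 g^\top \nabla_y g\|^2$ is bounded below by $4\mu_g^2 \|\nabla_y g\|^2$, using $\nabla_{yy}^2 g \succeq \mu_g I$. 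The subtlety is that this lower bound degenerates to zero exactly on $\mathcal M$, where $h=0$. The standard trick (as in \cite{allibhoy2023control}) is that when the numerator $[\,\cdot\,]_+$ is active, it is itself $O(\|\nabla_y g\|)$ near $\mathcal M$ (because the $\alpha(h-\varepsilon^2)$ term is then negative, forcing the gradient inner-product terms to be large, i.e. at least $\alpha\varepsilon^2 + o(1) > 0$), so $\lambda$ stays bounded; more carefully, on the relaxed set we actually have $h \le \varepsilon^2$, so $\alpha(h-\varepsilon^2) \le 0$ and $\lambda$ is active only when the gradient terms dominate, keeping the ratio continuous across the switching surface $\lambda = 0$. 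I would phrase this as: $\lambda(x,y)$ is continuous because both one-sided limits agree at $\lambda = 0$, and piecewise Lipschitz on each region, hence Lipschitz; then products of Lipschitz-and-bounded functions are Lipschitz. I expect this to be \textbf{the main obstacle}, precisely because of the boundary behavior at $h = 0$ and the $[\cdot]_+$ switching.

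For \textbf{(ii), forward invariance of $L_{\varepsilon^2}^{-}(h)$}, this is a direct Nagumo/comparison argument: along trajectories, $\frac{d}{dt}(h - \varepsilon^2) = \nabla_x h^\top \dot x + \nabla_y h^\top \dot y$, and by construction the QP constraint in \eqref{eq: first-order lower-level dynamics 2} enforces $\frac{d}{dt}(h-\varepsilon^2) \le -\alpha(h - \varepsilon^2)$ whenever the filter is active; when $\lambda = 0$ the constraint was inactive, meaning the unfiltered quantity already satisfies $\le -\alpha(h-\varepsilon^2) \le 0$ on the boundary. Either way, letting $\phi(t) = h(x(t),y(t)) - \varepsilon^2$, we get $\dot\phi(t) \le -\alpha \phi(t)$, so by Grönwall $\phi(t) \le e^{-\alpha t}\phi(0) \le 0$ when $\phi(0) \le 0$. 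I would state this cleanly as $h(x(t),y(t)) - \varepsilon^2 \le e^{-\alpha t}(h(x(0),y(0)) - \varepsilon^2)$.

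For \textbf{(iii), equilibria recover solutions of \eqref{eq:bilevel-approx}}, I would write the KKT system of the inequality-constrained problem \eqref{eq:bilevel-approx}: there exists $\lambda \ge 0$ with $\nabla_x f + \lambda \nabla_x h = 0$, $\nabla_y f + \lambda \nabla_y h = 0$, $h - \varepsilon^2 \le 0$, and complementary slackness $\lambda(h-\varepsilon^2) = 0$. Setting $\dot x = \dot y = 0$ in \eqref{eq:projectionXY} gives exactly the first two stationarity equations with the $\lambda$ from the formula, which is $\ge 0$ by the $[\cdot]_+$; feasibility $h \le \varepsilon^2$ holds by item (ii); and complementarity follows because if $h < \varepsilon^2$ at equilibrium then the numerator's $\alpha(h - \varepsilon^2)$ term is strictly negative, and one checks the bracket is then $\le 0$ (using that at equilibrium $\nabla f$ is parallel to $\nabla h$ from stationarity), forcing $\lambda = 0$. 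Conversely any KKT point is an equilibrium. I would note that \eqref{eq:bilevel-approx} need not be convex so these are stationary points, matching the phrasing "recovers the solution" loosely as in the original paper's Remark for \eqref{eq: gradient flow upper level safe}.

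For \textbf{(iv), feasibility of the QP}, this is immediate: the constraint in \eqref{eq: first-order lower-level dynamics 2} is a single affine \emph{inequality} in $(\dot x_d, \dot y_d)$, whose feasible set is a closed half-space (nonempty) when $(\nabla_x h, \nabla_y h) \ne 0$, and is all of $\reals^{n+m}$ when $\nabla h = 0$ and $h - \varepsilon^2 \le 0$ (which holds on the invariant set); in the degenerate case $\nabla h = 0$ but $h - \varepsilon^2 > 0$ cannot occur inside $L_{\varepsilon^2}^-(h)$, so feasibility never fails along trajectories starting in the relaxed set. I would phrase (iv) as holding for all $(x,y) \in L_{\varepsilon^2}^-(h)$, or unconditionally by noting a single inequality constraint with the convention $\lambda = 0$ handles the zero-gradient case, and reference \Cref{derivation} for the closed-form dual.
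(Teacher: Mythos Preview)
Your proposal is correct and largely parallels the paper's proof, but the approaches differ noticeably in parts (i) and (iv).

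For (i), the paper does not analyze the closed-form $\lambda(x,y)$ directly. Instead, it views \eqref{eq: first-order lower-level dynamics 2} as a parametric projection $\mathrm{Proj}_{\mathcal C(x,y)}(v(x,y))$ onto a half-space (where Slater's condition holds trivially), invokes a sensitivity result for parametric QPs \cite[Theorem~2]{ralph1995directional} to obtain local Lipschitz continuity of the solution map, and then appeals to compactness of the invariant set to upgrade to global Lipschitz continuity. Your direct argument---that $\lambda \equiv 0$ in a full neighborhood of $\mathcal M$ because the $\alpha(h-\varepsilon^2)$ term is strictly negative there---is valid and more elementary, and indeed clarifies \emph{why} the discontinuity that plagued \eqref{eq: gradient flow upper level safe 2} disappears after the relaxation. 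The paper's approach is shorter and would extend more readily to multiple inequality constraints.

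For (iv), the paper constructs an explicit feasible point in both regimes: $(0,0)$ when $h \le \varepsilon^2$, and $\bigl(0,\,-\tfrac{\alpha(h-\varepsilon^2)}{\nabla_y h^\top \nabla_y g}\nabla_y g\bigr)$ when $h > \varepsilon^2$. Your half-space argument is equally valid and arguably cleaner; you can strengthen it to hold unconditionally (not just on $L_{\varepsilon^2}^-(h)$) by noting that $\nabla h = 0$ forces $\nabla_{yy}^2 g\,\nabla_y g = 0$, hence $\nabla_y g = 0$ by strong convexity, so the problematic case ``$\nabla h = 0$ with $h > \varepsilon^2$'' is impossible everywhere.

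Parts (ii) and (iii) match the paper's argument; your Gr\"onwall bound and explicit complementarity check are in fact more detailed than what the paper writes out.
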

\begin{proof}
    See \Cref{proof:PropRXGF}.
\end{proof}

\subsection{Convergence Analysis} 
The dynamics in \eqref{eq: first-order lower-level dynamics 2} ensure that the trajectory \((x(t), y(t))\) remains feasible for the approximate problem \eqref{eq:bilevel-approx}. Next, we focus on proving the convergence of trajectories to the solution of \eqref{eq:bilevel-approx}. Let \(f^\star_\varepsilon\) denote the optimal value of \eqref{eq:bilevel-approx}. Consider the following function, 
\begin{align}\label{eq:lyapunov-inversion-free}
\mathcal{E}(t) \! := \!  f(x(t),y(t)) \! - \! f^\star_\varepsilon \! + c\int_{0}^t \|F(x(\tau), y(\tau))\|^2 d\tau,
\end{align}
for some $c>0$. 
In \Cref{thm:converganceRate} we show that \eqref{eq:lyapunov-inversion-free} is a Lyapunov function for \eqref{eq: first-order lower-level dynamics 2}.

\begin{theorem}[Convergence of \eqref{eq: gradient flow upper level safe 2}]\label{thm:converganceRate}
    Suppose Assumption \ref{assumption:lowerlevel} holds, and let $x(t)$ and $y(t)$ be the solutions of \eqref{eq:projectionXY}. 
    Let $c:= \mu_g^2/(\mu_g^2+(L_{yx}^g)^2)$. Then for any $t\geq 0$, $\mathcal{E}(t)  \leq  \mathcal{E}(0)$. In particular, 
    \begin{align}
        \frac{1}{t}\int_{0}^t \|\nabla \ell(x(\tau))\|^2 d\tau 
        \leq \frac{2M_1^2}{\mu_g^2}\varepsilon^2 
        + \frac{2}{ct}( f(x(0),y(0)) - f^*_{\varepsilon}). \notag
    \end{align}
\end{theorem}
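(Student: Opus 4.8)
The plan is to show that $\mathcal{E}(t)$ in \eqref{eq:lyapunov-inversion-free} is non-increasing by bounding $\dot{\mathcal{E}}(t)$, and then to translate that monotonicity into the claimed time-averaged bound on $\|\nabla\ell(x(\tau))\|^2$. First I would compute the time derivative along trajectories of \eqref{eq:projectionXY}:
\[
\dot{\mathcal{E}}(t) = \langle \nabla_x f, \dot x\rangle + \langle \nabla_y f, \dot y\rangle + c\,\|F(x,y)\|^2.
\]
Substituting $\dot x = -\nabla_x f - \lambda \nabla_x h$ and $\dot y = -\nabla_y f - \lambda \nabla_y h$ gives
\[
\dot{\mathcal{E}}(t) = -\|\nabla_x f\|^2 - \|\nabla_y f\|^2 - \lambda\big(\nabla_x h^\top \nabla_x f + \nabla_y h^\top \nabla_y f\big) + c\,\|F\|^2.
\]
The key observation is that whenever $\lambda>0$ the KKT/active-constraint relation forces $\nabla_x h^\top\nabla_x f + \nabla_y h^\top\nabla_y f = \alpha(h-\varepsilon^2) - \lambda(\|\nabla_x h\|^2+\|\nabla_y h\|^2)$ (from the closed form of $\lambda$), so the cross term becomes $-\lambda\alpha(h-\varepsilon^2) + \lambda^2(\|\nabla_x h\|^2+\|\nabla_y h\|^2) \ge 0$ on the feasible set $L^-_{\varepsilon^2}(h)$ because $h-\varepsilon^2\le 0$ and $\lambda\ge0$; when $\lambda=0$ the cross term vanishes. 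Hence
\[
\dot{\mathcal{E}}(t) \le -\|\nabla_x f\|^2 - \|\nabla_y f\|^2 + c\,\|F\|^2.
\]

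Next I would relate $\|F(x,y)\|^2$ to $\|\nabla_x f\|^2 + \|\nabla_y f\|^2$. Using the definition $F = \nabla_x f - (\nabla^2_{yx} g)^\top v$ with $\nabla^2_{yy} g\, v = \nabla_y f$, strong convexity ($\nabla^2_{yy}g \succeq \mu_g I$) gives $\|v\| \le \|\nabla_y f\|/\mu_g$, and Assumption~\ref{assumption:lowerlevel-2} gives $\|\nabla^2_{yx}g\| \le L^g_{yx}$, so $\|F\| \le \|\nabla_x f\| + (L^g_{yx}/\mu_g)\|\nabla_y f\|$, whence $\|F\|^2 \le \big(1+(L^g_{yx}/\mu_g)^2\big)(\|\nabla_x f\|^2+\|\nabla_y f\|^2)$ by Cauchy–Schwarz. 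With the choice $c = \mu_g^2/(\mu_g^2+(L^g_{yx})^2)$ we get $c\,\|F\|^2 \le \|\nabla_x f\|^2 + \|\nabla_y f\|^2$, hence $\dot{\mathcal{E}}(t)\le 0$, which proves $\mathcal{E}(t)\le\mathcal{E}(0)$.

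Finally, to obtain the averaged bound I would integrate $\dot{\mathcal{E}}(t)\le -c\,\|F\|^2 \le 0$... more carefully: from $\mathcal{E}(t)\le\mathcal{E}(0)$ and the definition of $\mathcal{E}$, $c\int_0^t\|F(x(\tau),y(\tau))\|^2 d\tau \le f(x(0),y(0)) - f^\star_\varepsilon + \big(f^\star_\varepsilon - f(x(t),y(t))\big) \le f(x(0),y(0)) - f^\star_\varepsilon$, using $f(x(t),y(t))\ge f^\star_\varepsilon$ which holds because $(x(t),y(t))$ stays in $L^-_{\varepsilon^2}(h)$ by Proposition~\ref{thm:epsilon feasible}\eqref{prop:Feas} and $f^\star_\varepsilon$ is the minimum over that set. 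Then I would invoke Lemma~\ref{lem:error-grad-ell}: $\|\nabla\ell(x)\| \le \|F(x,y)\| + M_1\|y-y^\star(x)\|$, so $\|\nabla\ell(x)\|^2 \le 2\|F(x,y)\|^2 + 2M_1^2\|y-y^\star(x)\|^2$. The feasibility $h(x(t),y(t)) = \|\nabla_y g(x(t),y(t))\|^2 \le \varepsilon^2$ combined with strong convexity ($\|\nabla_y g(x,y)\| \ge \mu_g\|y - y^\star(x)\|$) gives $\|y-y^\star(x)\|^2 \le \varepsilon^2/\mu_g^2$ uniformly in $t$. Averaging over $[0,t]$ and combining the two pieces yields
\[
\frac{1}{t}\int_0^t \|\nabla\ell(x(\tau))\|^2 d\tau \le \frac{2}{t}\int_0^t\|F\|^2 d\tau + \frac{2M_1^2}{\mu_g^2}\varepsilon^2 \le \frac{2}{ct}\big(f(x(0),y(0)) - f^\star_\varepsilon\big) + \frac{2M_1^2}{\mu_g^2}\varepsilon^2,
\]
which is the claim. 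The main obstacle I anticipate is the sign bookkeeping in the first step — carefully handling the two cases $\lambda>0$ and $\lambda=0$ from the $[\cdot]_+$ in the closed form, and confirming that forward invariance ($h-\varepsilon^2\le0$) is exactly what makes the cross term have the right sign; the remaining steps are routine given Lemma~\ref{lem:error-grad-ell}, strong convexity, and Proposition~\ref{thm:epsilon feasible}.
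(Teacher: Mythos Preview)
Your sign bookkeeping in the first step is precisely where the argument breaks. You correctly compute that the cross term
\[
-\lambda\big(\nabla_x h^\top\nabla_x f + \nabla_y h^\top\nabla_y f\big)
= -\lambda\alpha(h-\varepsilon^2) + \lambda^2\big(\|\nabla_x h\|^2+\|\nabla_y h\|^2\big)
\]
is \emph{nonnegative} on $L^-_{\varepsilon^2}(h)$. But then
\[
\dot{\mathcal E}(t) = -\|\nabla_x f\|^2 - \|\nabla_y f\|^2 + (\text{nonnegative}) + c\|F\|^2,
\]
so you cannot drop the cross term downward; the inequality $\dot{\mathcal E}(t)\le -\|\nabla_x f\|^2 - \|\nabla_y f\|^2 + c\|F\|^2$ that you write goes the wrong way. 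Combining with $c\|F\|^2 \le \|\nabla_x f\|^2 + \|\nabla_y f\|^2$ then only gives $\dot{\mathcal E}(t)\le(\text{nonnegative})$, which is useless.

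The paper's remedy is to organize the estimate around $(\dot x,\dot y)$ rather than $(-\nabla_x f,-\nabla_y f)$. Since $(\dot x,\dot y)$ is the projection of $(-\nabla_x f,-\nabla_y f)$ onto the half-space in \eqref{eq: first-order lower-level dynamics 2}, the variational inequality for projections (Lemma~\ref{lemma:y_dot}), applied with the feasible comparison point $u=(\dot x,\,-[\nabla_{yy}^2 g]^{-1}\nabla_{yx}^2 g\,\dot x)$ from Lemma~\ref{lemma:safeDynamics}, yields $\dot{\mathcal E}(t)\le -\|\dot x\|^2 - \|\dot y\|^2 + c\|F\|^2$. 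One then invokes Lemma~\ref{lem:error-grad-ell-fo} with $d=-2\lambda\nabla_y g$, so that $A=-\dot x$ and $B=-\dot y$, giving $\|F\|^2 \le \big(1+(L_{yx}^g)^2/\mu_g^2\big)(\|\dot x\|^2+\|\dot y\|^2)$. This choice of $d$ is what absorbs the correction $\lambda\nabla h$ into the bound; your choice $d=0$ (bounding $\|F\|$ by $\|\nabla_x f\|,\|\nabla_y f\|$) cannot see that term. Your final step---translating the integral bound on $\|F\|^2$ into one on $\|\nabla\ell\|^2$ via Lemma~\ref{lem:error-grad-ell}, strong convexity, and forward invariance---is correct and matches the paper.
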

\begin{proof}
    See \Cref{proof:thmRXGF}.
\end{proof}

\begin{remark}[Comparison with \cite{liu2022bome}] \normalfont
    In \cite{liu2022bome}, the authors apply the value function approach along with a dynamic barrier similar to \eqref{eq: invariance condition} to develop a first-order method for solving \eqref{eq: bilevel task 1}. 
    This approach enables them to avoid Hessian calculations, though it requires computing the lower-level optimal value $y^\star(x)$ at each iteration. To mitigate this bottleneck, they approximate $y^\star(x)$ by performing $T$ steps of gradient descent on the lower-level problem, which can compromise the guarantees of their method. In contrast, our method employs the implicit function approach, eliminating the need to compute $y^\star(x)$. This ensures convergence, as demonstrated in \Cref{thm:converganceRate}.
\end{remark}

\section{Experiments}\label{sec:experiments}
In this section, we evaluate our method's performance and analyze the impact of each hyperparameter on a synthetic problem and a data hyper-cleaning task using the MNIST dataset. All experiments employ the Fourth-order Runge-Kutta (RK-4) discretization scheme. The code is available at \href{https://github.com/o4lc/SGF-BLO}{https://github.com/o4lc/SGF-BLO}.

\begin{figure*}[t]
    \centering
    \begin{minipage}{0.33\textwidth}
        \includegraphics[width=\textwidth]{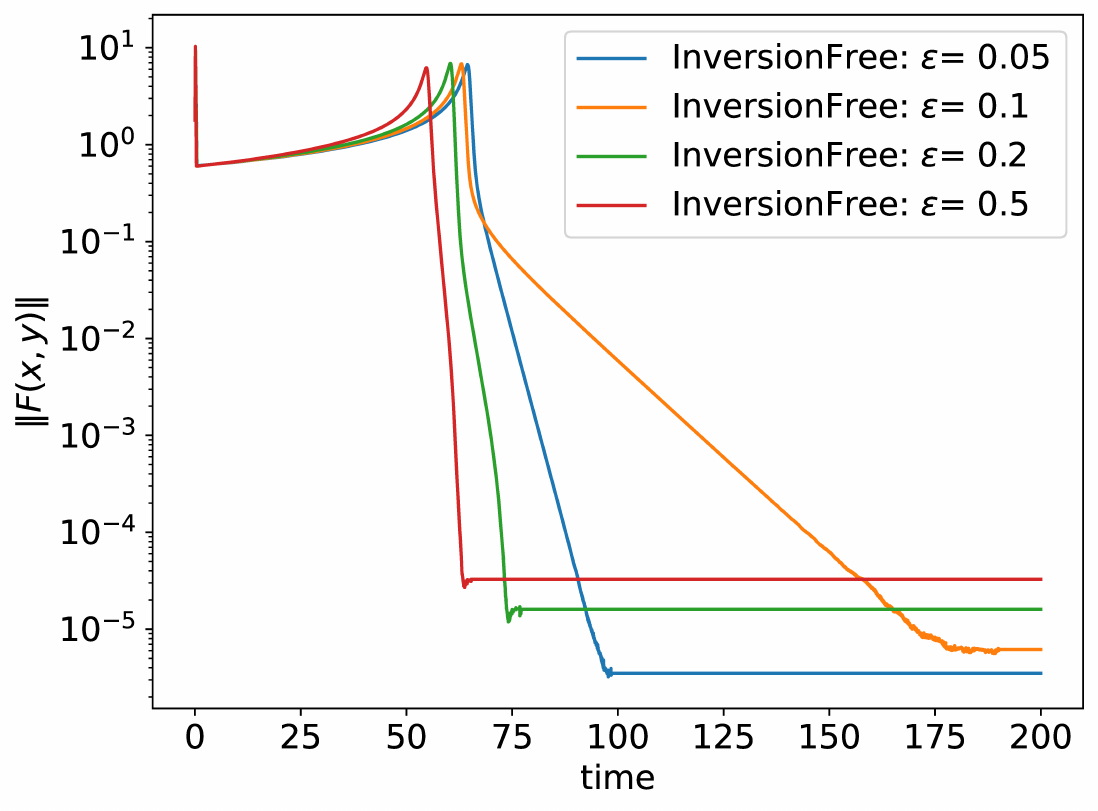}
        \label{fig:toyExample-Eps1}
    \end{minipage}
    %
    \begin{minipage}{0.32\textwidth}
        \includegraphics[width=\textwidth]{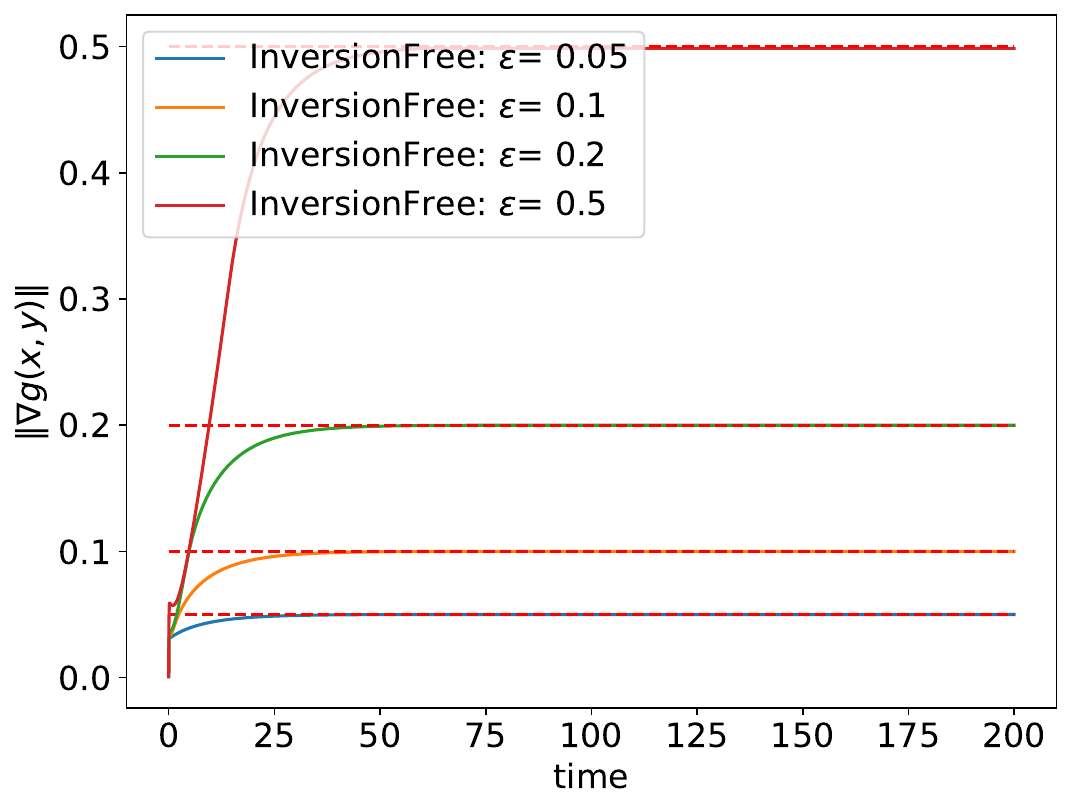}
        \label{fig:toyExample-Eps2}
    \end{minipage}
    \begin{minipage}{0.33\textwidth}
        \centering
        \includegraphics[width=\textwidth]{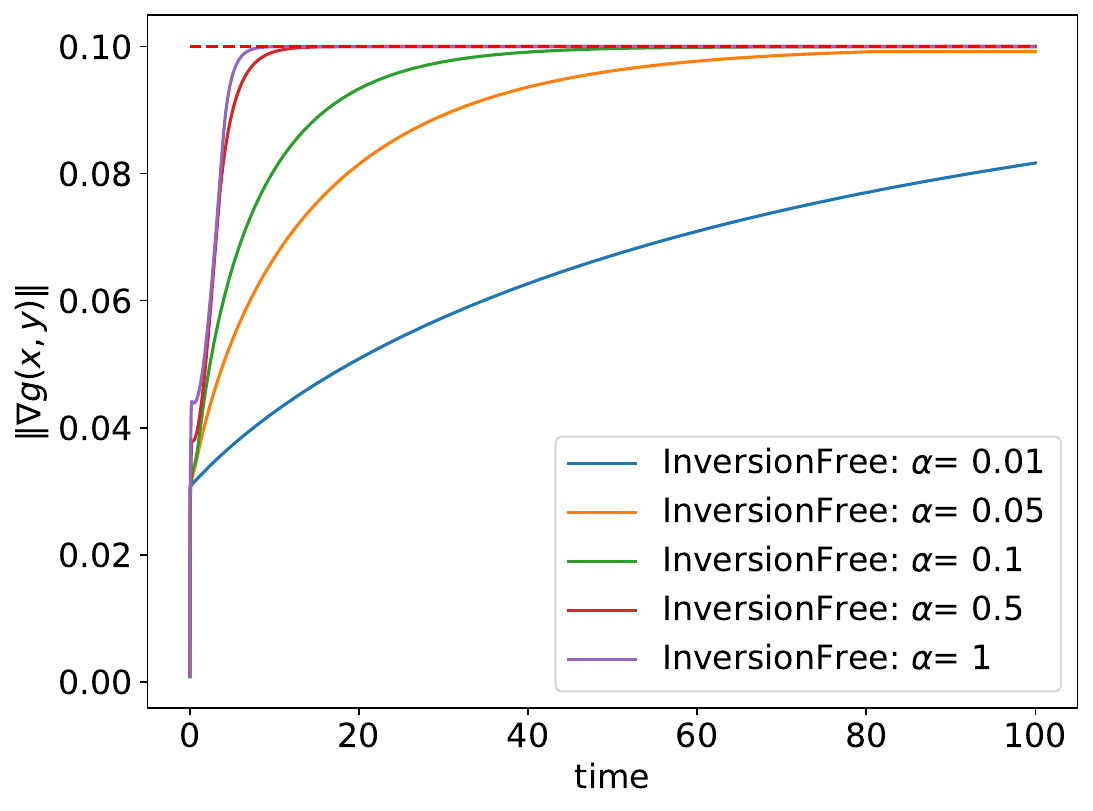}
        \label{fig:toyExample-Alpha}
    \end{minipage}%
    \caption{Effect of $\varepsilon$ on the convergence of the (\emph{left:}) surrogate map and (\emph{middle:}) lower-level problem with $\alpha = 0.01$.  
    (\emph{right:}) Effect of $\alpha$ on lower-level behavior. }
    \label{fig:ToyExample}
\end{figure*}

\subsection{Synthetic Example}
    Consider the following basic bilevel optimization problem 
    \begin{align}
            &\min_x \quad  \sin(c^\top x + d^\top y^\star(x)) +  \log(\|x+y^\star(x)\|^2 + 1) \quad \text{s.t.} \quad  y^\star(x) \in \argmin_y  \tfrac{1}{2} \|Hy - x\|^2,\notag
    \end{align}
    where $x, y, c, d \in \mathbb{R}^{20}$ and $H \in \mathbb{R}^{20 \times 20}$ is randomly generated such that its condition number is no larger than 10.
    In \Cref{fig:ToyExample} we illustrate the effect of the hyper-parameters $\varepsilon$ and $\alpha$.
    It can be seen that $\alpha$ controls how fast the trajectories approach the boundary of $\{(x,y) \mid \|\nabla_y g(x,y)\| \leq \varepsilon\}$.

\begin{figure}[t]
    \centering
        \includegraphics[width=0.35\textwidth]{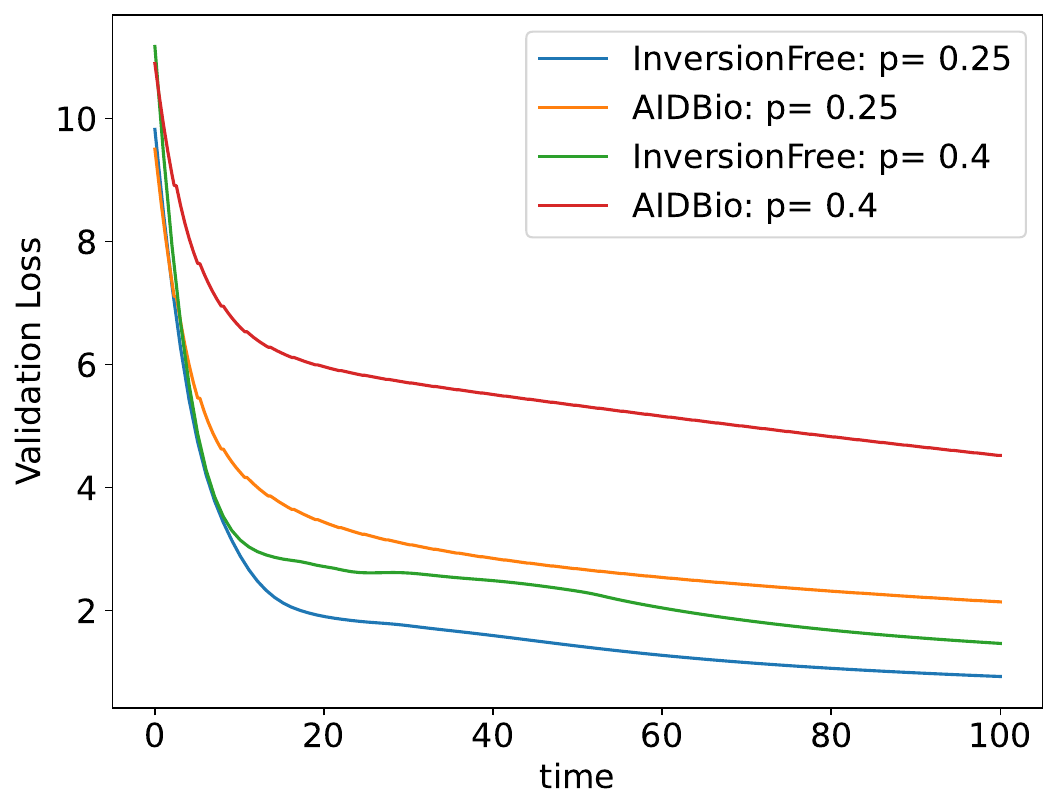}
        \includegraphics[width=0.35\textwidth]{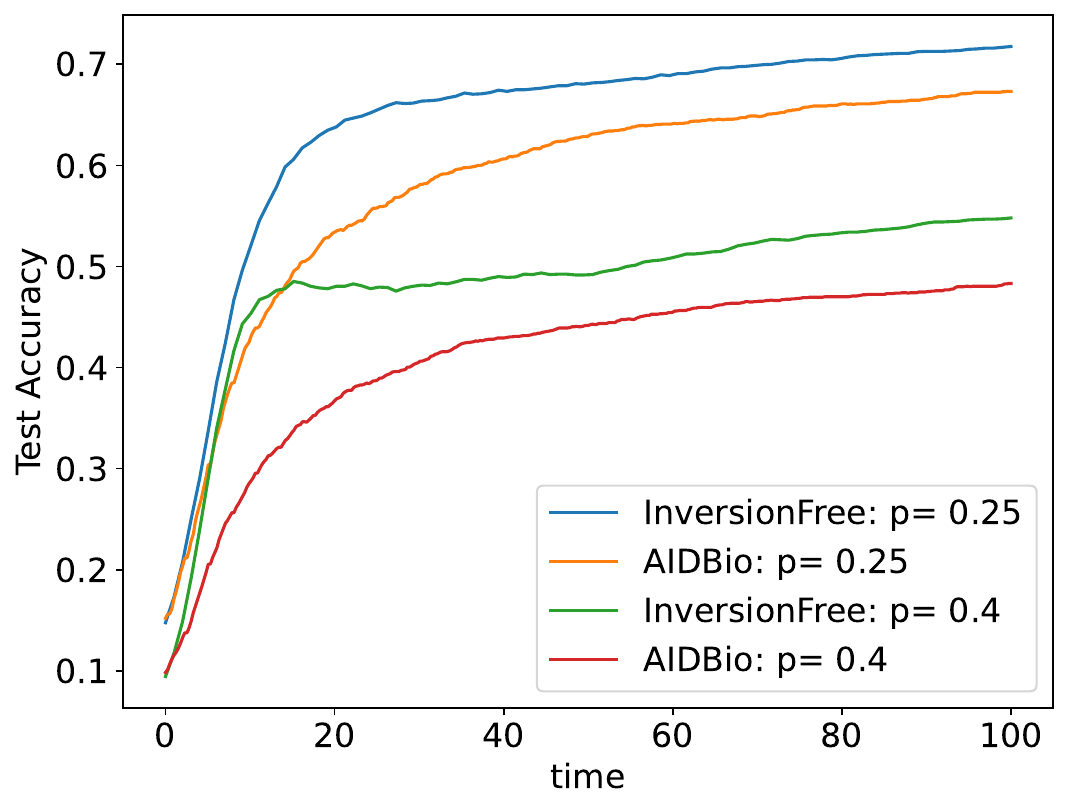}
    \caption{
    The comparison of the validation loss and the test accuracy between our Inversion-free method and AIDBio with $\alpha, \beta \in \{0.001, , 0.01, 0.1\}$ for $p \in \{ 25\%$, $40\% \}$.
    }
    \label{fig:dhc-results}
\end{figure}
    
\subsection{Data Hyper-Cleaning on MNIST}
    Consider the data hyper-cleaning problem,
    where some of the labels in the training data have been corrupted, and the goal is to train a classifier utilizing the clean validation data.
    The objective function is given by
    \begin{align}
        &\min_x \frac{1}{N_{\text{val}}} \sum_{(a_{i}, b_{i}) \in \mathcal{D}_{\text{val}}} \mathcal{L}(a_{i}^\top y^\star(x), b_{i}) \notag 
        &\text{s.t.} \  y^\star(x) \in \argmin_y
        \frac{1}{N_{\text{tr}}} 
        \sum_{(a_{i}, b_{i}) \in \mathcal{D}_{\text{tr}}} \sigma(x_i) \mathcal{L}(a_{i}^\top y, b_{i}) + \! \lambda  \|y\|^2, \notag
    \end{align}
    where $\lambda = 0.001$ is the regularizer and $\sigma(.)$ and $\mathcal{L}(.)$ represent the sigmoid function and cross-entropy loss, respectively.
    Due to the high dimensions of MNIST, we use PCA (maintaining $90 \%$ of the variance) to reduce the dimensions of the problem to $y \in \mathbb{R}^{82 \times 10}$ and $x \in \mathbb{R}^{5000}$, and split the data into training $\mathcal{D}_{\text{tr}}$, validation $\mathcal{D}_{\text{val}}$, and test $\mathcal{D}_{\text{test}}$. 
    We then randomly corrupt the label of $p \%$ of the classes. 
    The goal is to identify the wrong labels in $\mathcal{D}_{\text{tr}}$ using the clean data of $\mathcal{D}_{\text{val}}$.
    Here, the inner problem finds the optimal classifier weight $y^\star(x)$, and the outer problem finds the optimal sample weights $x^\star$ minimizing the validation loss.
    \Cref{fig:dhc-results} show the validation loss and test accuracy of our model compared to AIDBio \cite{ji2021bilevel}, where we observe that our method outperforms AIDBio. 
    To have a fair comparison, we counted the number of gradient evaluations in RK-4 and implemented AID-Bio with the same number of gradient evaluations.

\section{Conclusion and Future Works}

We introduced a control-theoretic approach to solving bilevel optimization problems. By combining a gradient flow mechanism for minimizing the upper-level objective with a safety filter to enforce the constraints induced by the lower-level problem, we developed a single-loop safe gradient flow capable of addressing bilevel problems effectively. To further enhance scalability with respect to lower-level problem dimensions, we proposed a relaxed formulation that maintains the lower-level solution within a user-defined distance while minimizing the upper-level objective. Using Lyapunov analysis, we provided theoretical guarantees for the convergence of both the standard and relaxed safe gradient flows, demonstrating convergence to a neighborhood of the optimal solution. Numerical experiments validated the practicality and robustness of the proposed methods in various scenarios. These contributions offer a novel perspective on bilevel optimization, bridging control theory and optimization. Future work will explore extensions to more complex problem settings, iterative implementation strategies, and further relaxation of assumptions to broaden the applicability of the proposed framework.

\clearpage
\appendix
\appendixpage

\section{Additional Material and Omitted Proofs}\label{Appendix:add}

\subsection{Remarks of \Cref{sec:Invariance}:}
\begin{lemma}\label{rm:Lip}
    The right-hand side of \eqref{eq: gradient flow upper level safe} is Lipschitz continuous, guaranteeing that the ODE is well-posed and admits a unique solution.
\end{lemma}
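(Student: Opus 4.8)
The plan is to show that each of the building blocks appearing on the right-hand side of \eqref{eq: gradient flow upper level safe} is Lipschitz continuous on $\reals^n\times\reals^m$, and then to conclude that their composition and products are Lipschitz. The right-hand side is built from $\nabla_x f$, $\nabla_y f$, $\nabla_{yx}^2 g$, $\nabla_{yy}^2 g$, $\nabla_y g$, and the matrix inverse $\left(\nabla_{yx}^2 g\,\nabla_{yx}^2 g^\top + (\nabla_{yy}^2 g)^2\right)^{-1}$ appearing in $\lambda(x,y)$ from \eqref{eq:lambda1}. First I would record the Lipschitz/boundedness facts that come directly from the assumptions: Assumption~\ref{assumption:upperlevel} gives that $\nabla_x f,\nabla_y f$ are bounded (by $C_x^f,C_y^f$) and Lipschitz (with $L_x^f,L_y^f$); Assumption~\ref{assumption:lowerlevel-2} and \ref{assumption:lowerlevel-3} give that $\nabla_y g$ has Lipschitz gradient (hence $\nabla_{yx}^2 g,\nabla_{yy}^2 g$ are bounded, say by $L_{yx}^g$ and $L_{yy}^g$ respectively) and that $\nabla_{yx}^2 g,\nabla_{yy}^2 g$ are themselves Lipschitz (with $C_{yx}^g,C_{yy}^g$). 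Note $\nabla_y g$ itself is \emph{not} globally bounded, but it is Lipschitz, and it only enters $\lambda$ multiplied by bounded Lipschitz matrices, so this is fine as long as I track the local-Lipschitz structure carefully or argue that the unbounded factor is tamed.

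The key step is the matrix-inverse term. Let $A(x,y) := \nabla_{yx}^2 g\,\nabla_{yx}^2 g^\top + (\nabla_{yy}^2 g)^2$. By Assumption~\ref{assumption:lowerlevel-1}, $\nabla_{yy}^2 g \succeq \mu_g I$, so $(\nabla_{yy}^2 g)^2 \succeq \mu_g^2 I$, and since $\nabla_{yx}^2 g\,\nabla_{yx}^2 g^\top \succeq 0$ we get $A(x,y)\succeq \mu_g^2 I$; hence $\|A(x,y)^{-1}\|\le \mu_g^{-2}$ uniformly. Moreover $A(\cdot)$ is bounded above (each summand is a product of bounded matrices) and Lipschitz (product and sum of bounded Lipschitz matrix-valued maps, using the standard identity $\|BC - B'C'\|\le \|B\|\|C-C'\| + \|C'\|\|B-B'\|$). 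Then the standard resolvent identity $A^{-1} - (A')^{-1} = A^{-1}(A'-A)(A')^{-1}$ together with the uniform bound $\|A^{-1}\|\le\mu_g^{-2}$ yields that $A(\cdot)^{-1}$ is globally Lipschitz with constant at most $\mu_g^{-4}L_A$, where $L_A$ is the Lipschitz constant of $A$.

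Next I would assemble $\lambda(x,y)$ from \eqref{eq:lambda1}: it is $-A^{-1}$ applied to the vector $w(x,y):=\nabla_{yx}^2 g\,\nabla_x f + \nabla_{yy}^2 g\,\nabla_y f - \alpha\nabla_y g$. Each term of $w$ is a product of bounded Lipschitz maps except the last, $\alpha\nabla_y g$, which is Lipschitz but unbounded; so $w$ is Lipschitz but not bounded. To handle the product $A^{-1}w$, I would use that $A^{-1}$ is bounded \emph{and} Lipschitz while $w$ is Lipschitz, plus the observation that on $\mathcal{M}$ (and in a tube around it, which is all the trajectory ever visits by the contraction property in \Cref{thm:Original}) $\nabla_y g$ is bounded — alternatively, and more cleanly, note that the entire right-hand side of \eqref{eq: gradient flow upper level safe} can be rewritten, after substituting $\lambda$, as a sum of products of globally bounded Lipschitz maps plus terms of the form $(\text{bounded Lipschitz matrix})\cdot \nabla_y g$; since $\nabla_y g$ is globally Lipschitz and each such coefficient matrix is bounded and Lipschitz, the product is Lipschitz on bounded sets, and a standard a~priori bound on trajectories upgrades this to what is needed. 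The cleanest writeup simply invokes: \emph{a finite sum of finite products of locally Lipschitz (indeed globally Lipschitz or bounded-Lipschitz) maps is locally Lipschitz}, which suffices for Picard–Lindelöf to give existence and uniqueness of solutions.

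The main obstacle I anticipate is the bookkeeping around the unbounded factor $\nabla_y g$: one must be careful not to claim a global Lipschitz constant for a product like $\nabla_{yy}^2 g \cdot \lambda$ when $\lambda$ contains $\nabla_y g$, because a (bounded Lipschitz)$\times$(Lipschitz) product is only locally Lipschitz in general. The honest resolution is either (i) to settle for local Lipschitzness, which is all that well-posedness requires, or (ii) to invoke the exponential contraction $\|\nabla_y g(x(t),y(t))\| = e^{-\alpha t}\|\nabla_y g(x(0),y(0))\|$ from \Cref{thm:Original} to confine the analysis to a forward-invariant tube on which $\nabla_y g$ is bounded, restoring global Lipschitzness of the vector field along trajectories. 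I would present option (i) as the primary argument since it is self-contained and does not rely on the theorem whose proof may itself use this lemma.
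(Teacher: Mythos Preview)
Your proposal is correct and reaches the same conclusion (local Lipschitz continuity of the vector field, hence well-posedness), but it proceeds along a genuinely different route from the paper. The paper does not work with the closed-form expression at all: it views \eqref{eq:convex QP problem} as a parametric projection $\mathrm{Proj}_{\mathcal{C}(x,y)}(v(x,y))$ onto an affine set defined by the full-row-rank matrix $[\nabla_{yx}^2 g \ \ \nabla_{yy}^2 g]$, checks that LICQ and the constant-rank condition hold (both from strong convexity of $g$ in $y$), and then invokes Ralph and Dempe's sensitivity result for parametric programs \cite{ralph1995directional} to conclude that the solution map $(x,y)\mapsto(\dot x,\dot y)$ is locally Lipschitz. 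Your approach instead unpacks the explicit formula \eqref{eq:lambda1}, bounds $A(x,y)^{-1}$ via the resolvent identity and the uniform lower bound $A\succeq \mu_g^2 I$, and tracks Lipschitz constants through the products. Your route is more elementary and self-contained (no black-box sensitivity theorem), at the cost of the bookkeeping you already flagged around the unbounded factor $\nabla_y g$; the paper's route is shorter and generalizes more readily (the same citation is reused for the relaxed filter in Proposition~\ref{thm:epsilon feasible}) but imports a nontrivial external result. Your resolution (i), settling for local Lipschitzness, is exactly what the paper's proof delivers as well, so there is no gap on that front.
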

\begin{proof}
    Equation \eqref{eq:convex QP problem} defines a strongly convex optimization problem, which guarantees that any solution, if it exists, is unique. Furthermore, the parametric Quadratic Program \eqref{eq:convex QP problem} can be viewed as a parametric projection map \(\text{Proj}_{\mathcal{C}(x,y)}(v(x,y))\), where \(v(x,y) = [-\nabla_x f(x,y), -\nabla_y f(x,y)]\), and \(\mathcal{C}(x,y)\) contains \(m\) linear equality constraints defined by the matrix \(\begin{bmatrix} \nabla_{yx}^2 g(x, y) & \nabla_{yy}^2 g(x, y) \end{bmatrix}\), and the vector \(b(x, y) = \alpha \nabla_y g(x, y)\). Under the assumptions that the gradients \(\nabla_x f\), \(\nabla_y f\), \(\nabla_{yx}^2 g\), and \(\nabla_{yy}^2 g\) are Lipschitz continuous and that \(g(x, y)\) is strongly convex in \(y\), the strong convexity ensures that 
    \(\begin{bmatrix} \nabla_{yx}^2 g(x, y) & \nabla_{yy}^2 g(x, y) \end{bmatrix}\)
    has full row rank, satisfying the Linear Independence Constraint Qualification (LICQ). Additionally, the rank of the matrix remains constant in a neighborhood around the solution, satisfying the Constant Rank Condition. Therefore, by \cite[Theorem 2]{ralph1995directional}, the solution map \((x, y) \mapsto (\dot{x}, \dot{y})\) is locally Lipschitz continuous.
\end{proof}

\begin{lemma}\label{rm:Equil}
    The equilibrium of \eqref{eq: gradient flow upper level safe} satisfies the KKT conditions of \eqref{eq: bilevel task 1 b}.
\end{lemma}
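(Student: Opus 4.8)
The plan is to exploit the closed-form structure of \eqref{eq: gradient flow upper level safe} together with the feasibility constraint of the QP \eqref{eq:convex QP problem}. Let $(x^\star,y^\star)$ be an equilibrium, i.e. a point at which the right-hand side of \eqref{eq: gradient flow upper level safe} vanishes, so that $\dot x = \dot y = 0$. Writing the Lagrangian of \eqref{eq: bilevel task 1 b} as $\mathcal{L}(x,y,\lambda) = f(x,y) + \lambda^\top \nabla_y g(x,y)$, the KKT system of \eqref{eq: bilevel task 1 b} consists of stationarity, $\nabla_x f(x,y) + \nabla_{yx}^2 g(x,y)^\top \lambda = 0$ and $\nabla_y f(x,y) + \nabla_{yy}^2 g(x,y)\lambda = 0$, together with primal feasibility $\nabla_y g(x,y) = 0$ (there are no inequality constraints, so complementary slackness is vacuous). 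I would exhibit a multiplier $\lambda^\star$ certifying these conditions at $(x^\star,y^\star)$.

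For stationarity: setting $\dot x = 0$ and $\dot y = 0$ in \eqref{eq: gradient flow upper level safe} yields exactly $\nabla_x f(x^\star,y^\star) + \nabla_{yx}^2 g(x^\star,y^\star)^\top \lambda(x^\star,y^\star) = 0$ and $\nabla_y f(x^\star,y^\star) + \nabla_{yy}^2 g(x^\star,y^\star)^\top \lambda(x^\star,y^\star) = 0$; since $\nabla_{yy}^2 g$ is symmetric, this is precisely the stationarity part of the KKT system with the choice $\lambda^\star := \lambda(x^\star,y^\star)$, the dual variable of \eqref{eq:convex QP problem} given in \eqref{eq:lambda1}. For primal feasibility: since the minimizer of the QP \eqref{eq:convex QP problem} at $(x^\star,y^\star)$ equals the velocity $(\dot x,\dot y) = (0,0)$, this pair must satisfy the QP's equality constraint, which collapses to $\alpha \nabla_y g(x^\star,y^\star) = 0$; as $\alpha > 0$, we obtain $\nabla_y g(x^\star,y^\star) = 0$. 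Combining the two observations shows that $(x^\star,y^\star)$ together with $\lambda^\star$ satisfies all KKT conditions of \eqref{eq: bilevel task 1 b}.

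There is essentially no hard step; the only points requiring care are (i) confirming that the $\lambda(x,y)$ appearing in the closed form \eqref{eq: gradient flow upper level safe} is genuinely the KKT multiplier of the QP, which is immediate since the QP's own stationarity conditions are exactly the two velocity equations, and (ii) noting that Assumption \ref{assumption:lowerlevel-1} ensures $[\nabla_{yx}^2 g \ \nabla_{yy}^2 g]$ has full row rank, so LICQ holds, the QP is feasible, and its optimal multiplier is unique — hence $\lambda^\star$ is unambiguous and well-defined. For completeness I would also record the converse: if $(x^\star,y^\star,\lambda^\star)$ is a KKT triple of \eqref{eq: bilevel task 1 b}, then $\nabla_y g(x^\star,y^\star)=0$ makes $(0,0)$ feasible for the QP while the stationarity equations make it optimal, so $(x^\star,y^\star)$ is an equilibrium of \eqref{eq: gradient flow upper level safe}, yielding an exact correspondence between equilibria and KKT points.
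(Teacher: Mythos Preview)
Your proposal is correct and follows essentially the same approach as the paper: both arguments observe that setting $\dot{x}=\dot{y}=0$ in the KKT system of the QP \eqref{eq:convex QP problem} reproduces exactly the stationarity and primal feasibility conditions of \eqref{eq: bilevel task 1 b}, with the QP multiplier $\lambda(x^\star,y^\star)$ serving as the KKT multiplier. Your write-up is somewhat more thorough (explicitly invoking LICQ for well-definedness of $\lambda^\star$ and recording the converse), but the underlying idea is identical.
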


\begin{proof}
    The KKT conditions of \eqref{eq: bilevel task 1 b}
    \begin{align}
        \begin{cases}
            \nabla_{x} \mathcal{L} = \nabla_x f(x,y) + \nabla_{yx}^2 g(x,y) \lambda = 0 \\
            \nabla_{y} \mathcal{L} =  \nabla_y f(x,y) + \nabla_{yy}^2 g(x,y) \lambda = 0 \\
            \nabla_y g(x,y) = 0 \\
        \end{cases} \notag
    \end{align}
    is equivalent to the KKT condition of \eqref{eq:convex QP problem} at the equilibrium ($\dot{x}, \dot{y} = 0$) 
    \begin{align}
        \begin{cases}
            \nabla_{\dot{x}} \mathcal{L} = \dot{x} + \nabla_x f(x,y) + \nabla_{yx}^2 g(x,y) \lambda = 0 \\
            \nabla_{\dot{y}} \mathcal{L} = \dot{y} + \nabla_y f(x,y) + \nabla_{yy}^2 g(x,y) \lambda = 0 \\
            \nabla_{yx}^2 g(x,y) \dot{x} + \nabla_{yy}^2 g(x,y) \dot{y} + \alpha \nabla_y g(x,y) = 0\\
        \end{cases} \notag
    \end{align}
    and the proof is complete.
\end{proof}

\begin{lemma}\label{lem:error-grad-ell-fo} 
Suppose Assumption \ref{assumption:lowerlevel} holds. For any vector $d\in\mathbb R^n$,  define  $A(x,y,d) :=  \nabla_x f(x,y) - \nabla_{yx}^2 g(x,y)^\top d $ and $B(x,y,d) :=  \nabla_y f(x,y) - \nabla_{yy}^2 g(x,y) d $, then we have
\begin{align}\label{eq: err_grad_ell}
    \|F(x, y)\| \leq \| A(x,y,d)\| + \frac{L_{yx}^g}{\mu_g} \| B(x,y,d)\|.
\end{align}
\end{lemma}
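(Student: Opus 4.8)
## Proof Proposal for Lemma~\ref{lem:error-grad-ell-fo}

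The plan is to exploit the fact that both $F(x,y)$ and the pair $(A(x,y,d), B(x,y,d))$ are built from the same ingredients $\nabla_x f$, $\nabla_y f$, $\nabla^2_{yx} g$, $\nabla^2_{yy} g$, differing only in which vector multiplies the Hessian blocks: $F$ uses the true surrogate adjoint $v(x,y)$ solving $\nabla^2_{yy} g(x,y)\, v(x,y) = \nabla_y f(x,y)$, whereas $A$ and $B$ use an arbitrary $d$. So the first step is to write $F(x,y)$ in terms of $A(x,y,d)$, $B(x,y,d)$ and the residual $v(x,y) - d$. Specifically, since $F(x,y) = \nabla_x f(x,y) - \nabla^2_{yx} g(x,y)^\top v(x,y)$, we have
\begin{align}
F(x,y) = A(x,y,d) - \nabla^2_{yx} g(x,y)^\top \big(v(x,y) - d\big). \notag
\end{align}
Taking norms and using the triangle inequality together with Assumption~\ref{assumption:lowerlevel-2} (which gives $\|\nabla^2_{yx} g(x,y)\| \le L^g_{yx}$, since $\nabla_y g(\cdot, y)$ being $L^g_{yx}$-Lipschitz bounds the operator norm of its Jacobian $\nabla^2_{yx} g$), yields
\begin{align}
\|F(x,y)\| \le \|A(x,y,d)\| + L^g_{yx}\, \|v(x,y) - d\|. \notag
\end{align}

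The second step is to bound $\|v(x,y) - d\|$ by $\|B(x,y,d)\|/\mu_g$. From the defining equation of $v(x,y)$, we have $B(x,y,d) = \nabla_y f(x,y) - \nabla^2_{yy} g(x,y)\, d = \nabla^2_{yy} g(x,y)\,\big(v(x,y) - d\big)$. By Assumption~\ref{assumption:lowerlevel-1}, $\nabla^2_{yy} g(x,y) \succeq \mu_g I$, so $\nabla^2_{yy} g(x,y)$ is invertible with $\|\nabla^2_{yy} g(x,y)^{-1}\| \le 1/\mu_g$, and therefore $\|v(x,y) - d\| = \|\nabla^2_{yy} g(x,y)^{-1} B(x,y,d)\| \le \|B(x,y,d)\|/\mu_g$. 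Substituting this into the bound from the first step gives exactly \eqref{eq: err_grad_ell}.

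There is no real obstacle here — the argument is a short sequence of algebraic manipulations plus the two standard consequences of Assumption~\ref{assumption:lowerlevel}. The only point requiring a moment's care is the justification that $\|\nabla^2_{yx} g(x,y)\| \le L^g_{yx}$ follows from the stated Lipschitz continuity of $\nabla_y g(\cdot, y)$; this is the usual fact that a Lipschitz map has operator-norm-bounded Jacobian (where it exists), and under Assumption~\ref{assumption:lowerlevel} the relevant derivatives exist by twice continuous differentiability of $g$. Everything else is immediate.
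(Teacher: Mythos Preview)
Your proof is correct and follows essentially the same approach as the paper: add and subtract $\nabla_{yx}^2 g(x,y)^\top d$ inside $F(x,y)$, apply the triangle inequality, and then bound the residual term using $\|\nabla_{yx}^2 g\|\le L_{yx}^g$ and $\|(\nabla_{yy}^2 g)^{-1}\|\le 1/\mu_g$ from Assumption~\ref{assumption:lowerlevel}. The only cosmetic difference is that you name the intermediate vector $v(x,y)-d$ explicitly, whereas the paper writes it directly as $[\nabla_{yy}^2 g]^{-1}(\nabla_{yy}^2 g\, d - \nabla_y f)$.
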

\begin{proof}
     Recall the definition of $F(x, y)$ in \eqref{eq:grad-surrogate}. Adding and subtracting the term $\nabla_{yx}^2 g(x,y)^\top d$ followed by using the triangle inequality yields
    \begin{align}
      \|F(x, y)\| &= \| \nabla_x f(x,y) - \nabla_{yx}^2 g(x,y)^\top[\nabla_{yy}^2 g(x,y)]^{-1}\nabla_y f(x,y) \|  \notag\\
      &= \|\nabla_x f(x,y) \pm \nabla_{yx}^2 g(x,y)^\top d  -\nabla_{yx}^2 g(x,y)^\top[\nabla_{yy}^2 g(x,y)]^{-1}\nabla_y f(x,y) \|  \notag \\
      &\leq \| A(x,y,d) \| + \|\nabla_{yx}^2 g(x,y)^\top[\nabla_{yy}^2 g(x,y)]^{-1}\Big( \notag \nabla_{yy}^2 g(x,y) d  - \nabla_y f(x,y) \Big) \|. \nonumber
    \end{align}
    Next, combining Assumption~\ref{assumption:lowerlevel} along with the application of the Cauchy–Schwarz inequality we have \eqref{eq: err_grad_ell}.
\end{proof}

\subsection{Proof of \Cref{thm:Original}:}\label{proof:thmSGF}
\begin{proof}
To prove the first claim, we start from the condition given in \eqref{eq: lower level invariance unconstrained}:
\[
\frac{d}{dt} \nabla_y g(x(t), y(t)) + \alpha \nabla_y g(x(t), y(t)) = 0.
\]
Solving this linear differential equation yields:
\[
\nabla_y g(x(t), y(t)) = e^{-\alpha t} \nabla_y g(x(0), y(0)),
\]
which implies:
\[
\|\nabla_y g(x(t), y(t))\| = e^{-\alpha t} \|\nabla_y g(x(0), y(0))\|.
\]
Next, we proceed to the second claim. By Lemma \ref{lem:error-grad-ell}, we have:
\[
\|\nabla \ell(x)\| \leq \|F(x, y)\| + M_1 \|y - y^\star(x)\|.
\]
Using the strong convexity of \( g(x, y) \) (Assumption \ref{assumption:lowerlevel}) and \cite[Theorem 2.1.10]{nesterov2018lectures}, we know:
\[
\mu_g \|y - y^\star(x)\| \leq \|\nabla_y g(x, y)\|.
\]
Thus
\[
\|\nabla \ell(x)\| \leq \|F(x, y)\| + \frac{M_1}{\mu_g} e^{-\alpha t} \|\nabla_y g(x(0), y(0))\|.
\]
We use Young's inequality
\begin{align}\label{eq:young'sIneq}
(a + b)^2 \leq (1 + \gamma)a^2 + \left(1 + \frac{1}{\gamma}\right)b^2, \quad \forall \gamma > 0,
\end{align}
with $\gamma = 1$, we obtain
\begin{align}\label{eq:bound on ell}
\|\nabla \ell(x)\|^2 \leq  2\|F(x, y)\|^2 + 
2\big( \frac{M_1}{\mu_g} e^{-\alpha t} \|\nabla_y g(x(0), y(0))\|\big)^2 .
\end{align}

Now, consider the energy function \( \mathcal{E}(t) \) defined in the theorem. Taking its time derivative gives:
\[
\dot{\mathcal{E}}(t) 
= \underbrace{\nabla_x f(x, y)}_{= -\dot{x} - \nabla^2_{yx} g^\top \lambda}^\top \dot{x} + \underbrace{\nabla_y f(x, y)}_{= -\dot{y} - \nabla^2_{yy} g^\top \lambda}^\top \dot{y} 
+ \beta \underbrace{\big(\nabla^2_{yx} g(x, y) \dot{x} + \nabla^2_{yy} g(x, y) \dot{y}\big)^\top}_{= -\alpha \nabla_y g(x, y)} \frac{\nabla_y g(x, y)}{\|\nabla_y g(x, y)\|} 
+ c \|F(x, y)\|^2.
\]
Simplifying yields
\[
\dot{\mathcal{E}}(t) 
= - \| \dot{x}\|^2 - \|\dot{y}\|^2 - \alpha \beta \|\nabla_y g(x, y)\| + \alpha \lambda(x, y)^\top \nabla_y g(x, y) + c \|F(x, y)\|^2.
\]
We proceed by using Lemma \ref{lem:error-grad-ell-fo} with $d = -2 \lambda(x,y) \nabla_y g(x,y)$, which results in
$\|F(x, y)\| \leq \| \dot{x}\| + \frac{L_{yx}^g}{\mu_g} \| \dot{y}\|$. By applying Young's inequality, we have
\[
\|F(x, y)\|^2 \leq (1 + \gamma)\| \dot{x}\|^2 + \left(1 + \frac{1}{\gamma}\right)\frac{(L_{yx}^g)^2}{\mu_g^2} \| \dot{y}\|^2.
\]
Choosing \( \gamma = \frac{(L_{yx}^g)^2}{\mu_g^2} \), the bound simplifies to
\[
\|F(x, y)\|^2 \leq \left(1 + \frac{(L_{yx}^g)^2}{\mu_g^2}\right)\| \dot{x}\|^2 + \left(1 + \frac{(L_{yx}^g)^2}{\mu_g^2}\right) \| \dot{y}\|^2.
\]
Substituting this back into \( \dot{\mathcal{E}}(t) \), we obtain
\[
\dot{\mathcal{E}}(t) \leq \big(c \big(1 + \frac{(L_{yx}^g)^2}{\mu_g^2}\big) - 1\big)(\| \dot{x}\|^2 + \| \dot{y}\|^2) 
+ \alpha(\lambda(x, y)^\top \nabla_y g(x, y) - \beta \|\nabla_y g(x, y)\|).
\]
The dual multiplier $\lambda(x,y)$ defined in \eqref{eq:lambda1} can be bounded as follows:
\begin{align}\label{eq:dual-bound}
\|\lambda(x,y)\| &\leq \frac{1}{\mu_g^2} (C_{yx}^g C_x^f + C_{yy}^g C_y^f+\alpha e^{-\alpha t}\|\nabla_y g(x(0),y(0))\|)\nonumber \\
&\leq \beta:=\frac{1}{\mu_g^2} (C_{yx}^g C_x^f + C_{yy}^g C_y^f+\alpha \|\nabla_y g(x(0),y(0))\|),
\end{align}%
where we used the strong convexity of $g$, combined with the boundedness of all terms involved in \eqref{eq:lambda1} according to Assumptions \ref{assumption:lowerlevel} and \ref{assumption:upperlevel}.
Choosing \( c := \frac{\mu_g^2}{\mu_g^2 + (L_{yx}^g)^2} \) and using \eqref{eq:dual-bound}, it follows that
$
\dot{\mathcal{E}}(t) \leq 0,
$
which implies \( \mathcal{E}(t) \leq \mathcal{E}(0) \).
Finally, since \( f(x(t), y(t)) - f^\star \geq 0 \), we obtain
\begin{align}
\int_{0}^t \|F(x(\tau), y(\tau))\|^2 d\tau 
&\leq \frac{1}{c} \mathcal{E}(0). \notag
\end{align}
Combining this with the bound on \( \|\nabla \ell(x)\|^2 \) in \eqref{eq:bound on ell}, 
$$
\int_{0}^t \|\nabla \ell (x(\tau))\|^2 d\tau \leq \frac{2\mathcal{E}(0)}{c} + 
2 \int_{0}^t \big(\frac{M_1}{\mu_g} e^{-\alpha \tau} \|\nabla_y g(x(0), y(0))\|\big)^2 d\tau
$$
completes the proof.
\end{proof}

\subsection{Derivation of \eqref{eq:projectionXY}:}\label{derivation}
    Writing the Lagrangian and the KKT conditions, we obtain
    \begin{align}\label{eq:KKTrelaxed}
        \mathcal{L}(\dot{y}, \dot{x}, \lambda) \! &= \! 
        \frac{1}{2} \|\dot{x} \! + \! \nabla_x f(x,y)\|^2 \! + \! \frac{1}{2} \|\dot{y} \! + \! \nabla_y f(x,y)\|^2 \! + \!  
        \lambda (\nabla_x h(x,y)^\top \dot{x} \! + \! \nabla_y h(x,y)^\top \dot{y} \! + \! \alpha (h(x,y) \! - \! \varepsilon^2)), \notag \\
        &\qquad \begin{cases}
            \nabla_{\dot{x}} \mathcal{L} = (\dot{x} + \nabla_x f(x,y)) + \lambda \nabla_x h(x,y) = 0 \\
            \nabla_{\dot{y}} \mathcal{L} = (\dot{y} + \nabla_y f(x,y)) + \lambda \nabla_y h(x,y) = 0 \\
            \lambda(\nabla_x h(x,y)^\top \dot{x} + \nabla_y h(x,y)^\top \dot{y} + \alpha (h(x,y) - \varepsilon^2)) = 0 \\
            \nabla_x h(x,y)^\top \dot{x} + \nabla_y h(x,y)^\top \dot{y} +\alpha (h(x,y) - \varepsilon^2)) \leq 0 \\
            \lambda \geq 0
        \end{cases}
    \end{align}
We then obtain $\dot{x} = -\nabla_x f(x,y) - \lambda \nabla_x h(x,y)$ and $\dot{y} = -\nabla_y f(x,y) - \lambda \nabla_y h(x,y)$. 
Substituting these into the rest of the conditions allows us to solve for $\lambda$ as follows
\begin{align}
    \lambda \Big(\nabla_x h(x,y)^\top (-\nabla_x f - \lambda \nabla_x h(x,y)) + 
    \nabla_y h(x,y)^\top (-\nabla_y f - \lambda \nabla_y h(x,y)) 
    +\alpha (h(x,y) - \varepsilon^2)\Big) = 0. \notag
\end{align}

\subsection{Proof of Proposition \ref{thm:epsilon feasible}:}\label{proof:PropRXGF}
\begin{proof}
Proof of \ref{prop:Lip}: 
Let us denote the feasible set of \eqref{eq: invariance condition} with $\mathcal{C}(x,y)$. Then, the quadratic program in \eqref{eq: first-order lower-level dynamics 2} can be viewed as a parametric projection map $\text{Proj}_{\mathcal C(x,y)}(v(x,y))$ where $v(x,y)=[-\nabla_x f(x,y),~-\nabla_y f(x,y)]$. Note that $\mathcal C(x,y)$ is a half-space (contains only one inequality) in which Slater's condition holds. 
Therefore, according to \cite[Theorem 2]{ralph1995directional} combined with Lipschitz continuity of $h$, $\nabla_x h$, $\nabla_y h$, $\nabla_x f$, and $\nabla_y f$, we conclude that $\text{Proj}_{\mathcal C(x,y)}(v(x,y))$ is a locally Lipschitz continuous map. Furthermore, since $(x,y)$ lies in the compact set $L^-_\epsilon(h)$ 
we can conclude that the projection map is Lipschitz continuous, hence, the uniqueness of the solution to the ODE follows from Picard-Lindel\"{o}f theorem \cite{haddad2008nonlinear}.

Proof of \ref{prop:Feas}: If \(((x(0), y(0)) \in L_{\varepsilon^2}^{-}(h) \), then $h(x(0),y(0)) -\varepsilon^2 \leq 0$.
This condition coupled with \eqref{eq: invariance condition} ensures that $h(x(t),y(t)) -\varepsilon^2 \leq 0$ for all $t \geq 0$.

Proof of \ref{prop:KKT}:
    The KKT conditions of \eqref{eq:bilevel-approx} is equivalent to the KKT condition of \eqref{eq: first-order lower-level dynamics 2} (see \eqref{eq:KKTrelaxed}), at the stationary point $\begin{bmatrix}\dot{x} \\ \dot{y}\end{bmatrix} = \begin{bmatrix}0 \\ 0\end{bmatrix}$.
    \begin{align}
        \begin{cases}
            \nabla_{x} \mathcal{L} = \nabla_x f(x,y) + \lambda \nabla_x h(x,y) = 0 \\
            \nabla_{y} \mathcal{L} =  \nabla_y f(x,y) + \lambda \nabla_y h(x,y) = 0 \\
            \tilde{\lambda}(h(x,y)-\varepsilon^2) = 0 \\
            h(x,y)-\varepsilon^2 \leq 0 \\
            \tilde{\lambda} \geq 0
        \end{cases} \notag
    \end{align}
    
Proof of \ref{prop:feas}: Two cases may occur, if $h(x,y)\leq \epsilon^2$, then $(\dot{x}_d,\dot{y}_d)=(0,0)$ is a trivial feasible point. Otherwise consider $(\dot{x}_d,\dot{y}_d)=(0,-\frac{\alpha(h-\epsilon^2)}{\nabla_y h^\top \nabla_y g}\nabla_y g)$.
\end{proof}

\subsection{Proof of \Cref{thm:converganceRate}:}\label{proof:thmRXGF}
\begin{lemma}[Lemma 2.2.7 \cite{nesterov2018lectures}]\label{lemma:y_dot}
    Let $\mathcal{D}\subseteq \mathbb R^n$ be a nonempty, closed convex set. Then for any $v\in\mathbb R^n$, $w=\mathrm{Proj}_{\mathcal{D}}(v)$ if and only if 
        $(v-w)^\top(u-w)\leq 0$, $\forall u\in\mathcal D$.
\end{lemma}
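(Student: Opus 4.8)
The plan is to prove both directions of this classical variational characterization of the Euclidean projection, using only the convexity of $\mathcal{D}$ and the definition $w = \mathrm{Proj}_\mathcal{D}(v) = \argmin_{x \in \mathcal{D}} \|x - v\|^2$, together with elementary expansions of the squared norm.

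For the necessity direction ($\Rightarrow$), I would fix an arbitrary $u \in \mathcal{D}$ and exploit convexity to build a feasible perturbation of $w$ toward $u$. Since $\mathcal{D}$ is convex, the point $w_t := w + t(u-w) = (1-t)w + tu$ lies in $\mathcal{D}$ for every $t \in [0,1]$. Optimality of $w$ then gives $\|w - v\|^2 \leq \|w_t - v\|^2$; expanding the right-hand side as $\|w-v\|^2 + 2t(w-v)^\top(u-w) + t^2\|u-w\|^2$ and cancelling the common term yields $0 \leq 2t(w-v)^\top(u-w) + t^2\|u-w\|^2$. Dividing by $t>0$ and letting $t \to 0^+$ annihilates the quadratic remainder and leaves $(w-v)^\top(u-w) \geq 0$, which is exactly $(v-w)^\top(u-w) \leq 0$; since $u \in \mathcal{D}$ was arbitrary, the stated inequality holds for all $u$.

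For the sufficiency direction ($\Leftarrow$), I would assume $(v-w)^\top(u-w) \leq 0$ for all $u \in \mathcal{D}$ and directly verify that $w$ attains the minimum. Writing $u - v = (u-w) + (w-v)$ and expanding gives $\|u-v\|^2 = \|u-w\|^2 + 2(u-w)^\top(w-v) + \|w-v\|^2$. The cross term equals $-2(u-w)^\top(v-w) \geq 0$ by hypothesis, and $\|u-w\|^2 \geq 0$, so $\|u-v\|^2 \geq \|w-v\|^2$ for every $u \in \mathcal{D}$. Hence $w$ minimizes $\|x-v\|^2$ over $\mathcal{D}$; strict convexity of the objective guarantees that this minimizer is unique, so $w = \mathrm{Proj}_\mathcal{D}(v)$.

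No step presents a genuine obstacle, as both directions reduce to expanding a squared norm and reading off a sign. The only point requiring care is the limiting argument in the necessity direction: one must keep the perturbation $w_t$ feasible (which convexity of $\mathcal{D}$ supplies) and divide by $t$ \emph{before} taking $t \to 0^+$, so that the quadratic term $t\|u-w\|^2$ vanishes and isolates the linear inequality.
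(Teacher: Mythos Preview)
Your proof is correct and is exactly the standard argument for this classical variational characterization of the Euclidean projection. However, the paper does not actually prove this lemma: it is stated with a direct citation to Nesterov's textbook (Lemma~2.2.7 in \cite{nesterov2018lectures}) and used as a black box in the proof of Theorem~\ref{thm:converganceRate}. So there is no paper-side proof to compare against---you have supplied a complete proof where the authors deferred to the reference.
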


\begin{lemma}\label{lemma:safeDynamics}
      If $(x, y) \in L^{-}_\varepsilon(h)$,
      for any $\dot{x}$, the choice of
    $
    \dot{y} = - [\nabla_{yy}^2 g(x,y)]^{-1} \nabla_{yx}^2 g(x,y) \dot{x}
    $
    satisfies \eqref{eq: invariance condition}.
\end{lemma}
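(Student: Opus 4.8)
The plan is to directly verify that the proposed choice of $\dot{y}$ makes the left-hand side of the invariance inequality \eqref{eq: invariance condition} equal to a nonpositive quantity. Recall $h(x,y) = \|\nabla_y g(x,y)\|^2$, so by the chain rule $\nabla_x h = 2\,\nabla_{yx}^2 g(x,y)^\top \nabla_y g(x,y)$ and $\nabla_y h = 2\,\nabla_{yy}^2 g(x,y)^\top \nabla_y g(x,y) = 2\,\nabla_{yy}^2 g(x,y)\,\nabla_y g(x,y)$ (using symmetry of the Hessian block $\nabla_{yy}^2 g$). Therefore
\[
\frac{d}{dt}\,h(x,y) = \nabla_x h^\top \dot{x} + \nabla_y h^\top \dot{y} = 2\,\nabla_y g(x,y)^\top\!\left(\nabla_{yx}^2 g(x,y)\,\dot{x} + \nabla_{yy}^2 g(x,y)\,\dot{y}\right).
\]

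First I would substitute $\dot{y} = -[\nabla_{yy}^2 g(x,y)]^{-1}\nabla_{yx}^2 g(x,y)\,\dot{x}$ into this expression; the inverse exists by Assumption~\ref{assumption:lowerlevel-1} (strong convexity makes $\nabla_{yy}^2 g \succeq \mu_g I$ invertible). Then $\nabla_{yx}^2 g(x,y)\,\dot{x} + \nabla_{yy}^2 g(x,y)\,\dot{y} = \nabla_{yx}^2 g\,\dot{x} - \nabla_{yx}^2 g\,\dot{x} = 0$, so $\frac{d}{dt} h(x,y) = 0$. Consequently the left-hand side of \eqref{eq: invariance condition} equals
\[
\frac{d}{dt}(h - \varepsilon^2) + \alpha(h - \varepsilon^2) = 0 + \alpha\,(h(x,y) - \varepsilon^2).
\]

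Finally I would invoke the hypothesis $(x,y)\in L_\varepsilon^{-}(h)$, i.e. $h(x,y) - \varepsilon^2 \le 0$, together with $\alpha > 0$, to conclude $\alpha(h(x,y)-\varepsilon^2) \le 0$, which is exactly \eqref{eq: invariance condition}.

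The argument is essentially a one-line computation, so there is no real obstacle; the only point requiring minor care is the chain-rule expansion of $\nabla h$ in terms of the Hessian blocks of $g$ and the use of symmetry of $\nabla_{yy}^2 g$ to write $\nabla_y h = 2\nabla_{yy}^2 g\,\nabla_y g$. It is worth noting that the choice of $\dot{y}$ in fact gives the stronger conclusion $\frac{d}{dt}h = 0$ (it keeps the value of $h$ constant along the trajectory, not merely nonincreasing in the barrier sense), and the membership hypothesis is what converts this into the required inequality form of the control-barrier condition.
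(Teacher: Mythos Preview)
Your proposal is correct and follows essentially the same route as the paper's proof: both compute $\nabla_x h = 2\nabla_{yx}^2 g^\top \nabla_y g$ and $\nabla_y h = 2\nabla_{yy}^2 g^\top \nabla_y g$, substitute the given $\dot{y}$ to show the time-derivative term vanishes, and then use the sublevel-set hypothesis to conclude $\alpha(h-\varepsilon^2)\le 0$. Your additional remark that this choice in fact yields $\tfrac{d}{dt}h=0$ (a stronger conclusion than needed) is accurate and worth noting.
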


\begin{proof}
    From \( (x, y) \in L^{-}_\varepsilon(h)\), we have
    \(\alpha (h(x,y) - \varepsilon^2) \leq 0\). Thus we can write
    \begin{align}
        \underbrace{\nabla_x h(x,y)^\top \dot{x} \!-\! \nabla_y h(x,y)^\top {\nabla_{yy}^2 g(x,y)}^{-1} \nabla_{yx}^2 g(x,y) \dot{x}}_{=0}  +\alpha (h(x,y) - \varepsilon^2) \leq 0\notag.
    \end{align}
    where we use $\nabla_x h(x,y) = 2 \nabla_{yx}^2g(x,y)^\top \nabla_y g(x,y)$ and
    $\nabla_y h(x,y) = 2 \nabla_{yy}^2g(x,y)^\top \nabla_y g(x,y)$ by definition.
\end{proof}

\begin{proof}
    Using Lemma \ref{lem:error-grad-ell}, we have
    \begin{align}\label{eq:boundEll}
        \|\nabla \ell(x)\| &\leq \|F(x,y)\| + M_1\|y - y^\star(x)\| \leq \|F(x,y)\| +  \frac{M_1}{\mu_g}\varepsilon,
    \end{align}
    where the second inequality is a consequence of the strong convexity of $g(x,y)$ according to Assumption \ref{assumption:lowerlevel}, where we invoke \cite[Theorem 2.1.10]{nesterov2018lectures} to conclude that
        $\mu_g \|y \!-\! y^\star(x)\| \leq 
        \! \|\nabla_y g(x,y)\|$. 
    Now we use \eqref{eq:young'sIneq} with $\gamma = 1$ and obtain 
    \begin{align}\label{eq: bound ell with young}        
    \|\nabla \ell(x)\|^2 \leq 2\|F(x,y)\|^2 + 2\frac{M_1^2}{\mu_g^2}\varepsilon^2.
    \end{align}
    
    Then, consider the function defined in \eqref{eq:lyapunov-inversion-free}. Taking the derivative w.r.t to $t$ followed by adding and subtracting $\Tilde{d_1}(x(t),y(t))^\top \dot{x}(t)$ imply that
        \begin{align}\label{prf:dynamic}
            \dot{\mathcal{E}}(t) 
            & = \Big(\nabla_x f(x(t),y(t)) +  \Tilde{d}_1(x(t),y(t))\Big)^\top \dot{x}(t) - \Tilde{d}_1(x(t),y(t))^\top \dot{x}(t) \nonumber\\
            & \quad +  \nabla_y f(x(t),y(t))^\top \dot{y}(t)+ c \|F(x(t), y(t))\|^2 \nonumber \\
            & = - \| \dot{x}(t)\|^2 - \Tilde{d}_1(x(t),y(t))^\top \dot{x}(t) + \underbrace{\nabla_y f(x(t),y(t))^\top \dot{y}(t)}_{\kappa} + c \|F(x(t), y(t))\|^2,
        \end{align}
    where we define $\tilde{d}_1(x, y) = \lambda(x,y) \nabla_x h(x,y)$ and $\tilde{d}_2(x, y) = \lambda(x,y) \nabla_y h(x,y)$.
    Next, we provide an upper bound for $\kappa$ using \Cref{lemma:y_dot}. 
    In particular, define
    \begin{align}
        v &:= \begin{bmatrix}-\nabla_x f(x,y)\\ -\nabla_y f(x,y)\end{bmatrix}, 
        \quad w := \begin{bmatrix} \dot{x} \\ \dot{y}\end{bmatrix}, \notag \\
        u &:= \begin{bmatrix}
            \dot{x}\\- [\nabla_{yy}^2 g(x(t),y(t))]^{-1} \nabla
        _{yx}^2 g(x(t),y(t)) \dot{x}(t)
        \end{bmatrix}.\notag
    \end{align}
    Note that according to \eqref{eq: first-order lower-level dynamics 2}, $w$ is the projection of $\nu$ onto the constraint set. 
    Then using \eqref{eq: invariance condition} and that $(x(t), y(t)) \in L_{\varepsilon^2}^{-}(h) \ \forall t \geq 0$ we conclude that $u$ satisfies the invariance condition, based on Lemma \ref{lemma:safeDynamics}.
    Then according to Lemma \ref{lemma:y_dot}, one can write $(v - w)^\top(u - w) \leq 0$
    \begin{align}\label{prf: y_dot_p}
        \nabla_y f(x(t), y(t))^ \top \dot{y}(t) 
        &\leq \tilde{d}_2^\top \nabla_{yy}^2 g (x(t), y(t))^{-1} \nabla_{yx}^2 g(x(t), y(t)) \dot{x}(t) 
        - \| \dot{y}(t)\|^2  \notag \\
        &= \tilde{d}_1^\top(x(t),y(t)) \dot{x}(t) - \| \dot{y}(t)\|^2.
    \end{align}
    Then combining \eqref{prf:dynamic} and \eqref{prf: y_dot_p} leads to 
    \begin{align}
        \dot{\mathcal{E}}(t) &\leq - \| \dot{x}(t)\|^2 - \| \dot{y}(t)\|^2 + c \|F(x, y)\|^2 \nonumber
         \leq (c (1 + \frac{(L_{yx}^g)^2}{\mu_g^2}) -1) (\| \dot{x}(t)\|^2+\| \dot{y}(t)\|^2), \notag
    \end{align}
    where the last inequality is obtained using \eqref{eq:young'sIneq} and \Cref{lem:error-grad-ell-fo} with $d = \lambda$.
    Choosing $c=\mu_g^2/(\mu_g^2+(L_{yx}^g)^2)$ implies that $\dot{\mathcal{E}}(t) \leq 0$, hence, $\mathcal{E}(t)\leq \mathcal{E}(0)$. 
    Then, using the fact that $f(x(t),y(t)) - f^\star_\varepsilon\geq 0$ and dividing both sides of the inequality by $t$ we obtain
    $
        \frac{1}{t}\int_{0}^t \|F(x(\tau), y(\tau))\|^2 d\tau\leq \frac{1}{ct}( f(x(0),y(0)) - f^*_{\varepsilon}). \notag
    $
    Finally, combining this result with \eqref{eq: bound ell with young} finishes the proof.

\end{proof}

\section{Prediction-Correction Dynamics}\label{sec:secondOrder}
Another approach to analyzing \eqref{eq: bilevel task 1 b} is to allow $\dot{x}$ to follow the gradient flow for minimizing the implicit objective function, $\dot{x} = -\nabla \ell(x)$. 
According to \eqref{eq:grad-implicit}, calculating $\nabla \ell(x)$  requires an exact lower-level solution $y^\star(x)$. Thus, we instead consider the following common surrogate map to estimate \eqref{eq:grad-implicit} by replacing the optimal lower-level solution with an approximated solution $y$,
$$
\dot{x} = -F(x,y). 
$$
With the upper-level dynamics set, we focus on designing the lower-level dynamics.
The lower-level objective is indexed by the upper-level decision variable $x$. 
We propose to view this decision variable as a dynamic parameter $x(t)$ that evolves depending on the dynamics of the upper-level optimizer. 
Given such a trajectory, our main idea is then to design a dynamical system for the lower-level optimizer that tracks the optimal \emph{trajectory} $y^\star(t):= y^\star(x(t))$. 
To this end, we consider the following \emph{prediction-correction} update law inspired by \cite{fazlyab2017prediction},
\begin{align}
    \dot{y} &= - [\nabla_{yy}^2 g(x,y)]^{-1}  [\beta \nabla_y g(x,y)+\nabla_{yx}^2g(x,y) \dot{x}], \notag
\end{align}
where $\beta>0$. This dynamics consists of a Newton's correction term ($- [\nabla_{yy}^2 g(x,y)]^{-1}\nabla_y g(x,y)$) that contracts $y$ towards $y^\star(x)$, and a prediction term ($- [\nabla_{yy}^2g(x,y)]^{-1}  \nabla^2_{yx}g(x,y) \dot{x}$) that tracks the changes in $y^\star(x)$ as $x$ evolves. The overall dynamics ensures $y(t)$ converges to $y^\star(x(t))$ exponentially fast (see \Cref{lem:y-rate}). 
Based on the preceding discussion,  we propose the following update law for the variables $x$ and $y$,
\begin{subequations}\label{eq:dynamic-x-so}
\begin{align}
    \dot{x} &= - F(x,y), \label{eq:dynamic-x}\\
    \dot{y} &= - [\nabla_{yy}^2 g(x,y)]^{-1}  [\beta \nabla_y g(x,y)+\nabla_{yx}^2g(x,y) \dot{x}]. \label{eq:dynamic-y}
\end{align}
\end{subequations}

\subsection{Convergence Analysis}
To analyze the convergence of the proposed prediction-correction bilevel ODE \eqref{eq:dynamic-x-so}, we first show that the lower-level trajectory $y(t)$ is globally exponentially convergent to the lower-level optimal trajectory $y^\star(x(t))$. 
\begin{proposition}[Contraction of Lower-level Dynamics]\label{lem:y-rate}
    Let $(x(t),y(t))$ be the solution of \eqref{eq:dynamic-x-so}. Suppose Assumption \ref{assumption:lowerlevel} holds. Then for any $t\geq 0$,
    \begin{align}
        \|y(t) - y^\star(x(t))\|\leq \frac{1}{\mu_g}\|\nabla_y g(x(0),y(0))\| e^{-\beta t}.
    \end{align}
\end{proposition}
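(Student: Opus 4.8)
The plan is to show that the function $V(t) := \tfrac12\|\nabla_y g(x(t),y(t))\|^2 = \tfrac12 h(x(t),y(t))$ decays exponentially along trajectories of \eqref{eq:dynamic-x-so}, and then convert this into a bound on $\|y(t)-y^\star(x(t))\|$ via strong convexity. First I would compute $\dot V(t)$ using the chain rule: $\dot V = \nabla_y g^\top\big(\nabla_{yx}^2 g\,\dot x + \nabla_{yy}^2 g\,\dot y\big)$. The key observation is that the lower-level dynamics \eqref{eq:dynamic-y} were designed precisely so that $\nabla_{yy}^2 g\,\dot y = -\beta\nabla_y g - \nabla_{yx}^2 g\,\dot x$, hence $\nabla_{yx}^2 g\,\dot x + \nabla_{yy}^2 g\,\dot y = -\beta\nabla_y g$. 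Substituting gives the clean identity $\dot V(t) = -\beta\|\nabla_y g(x(t),y(t))\|^2 = -2\beta V(t)$, so $\|\nabla_y g(x(t),y(t))\| = e^{-\beta t}\|\nabla_y g(x(0),y(0))\|$ exactly (not merely a bound).

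Next I would pass from the residual $\nabla_y g$ to the iterate gap. By Assumption \ref{assumption:lowerlevel-1}, $g(x,\cdot)$ is $\mu_g$-strongly convex, and since $\nabla_y g(x,y^\star(x))=0$, the standard strong-convexity inequality (\cite[Theorem 2.1.10]{nesterov2018lectures}, already invoked elsewhere in the paper) yields $\mu_g\|y - y^\star(x)\| \leq \|\nabla_y g(x,y) - \nabla_y g(x,y^\star(x))\| = \|\nabla_y g(x,y)\|$ for every fixed $x$. Applying this with $x = x(t)$, $y = y(t)$ and combining with the exponential decay of the residual gives
\[
\|y(t)-y^\star(x(t))\| \leq \frac{1}{\mu_g}\|\nabla_y g(x(t),y(t))\| = \frac{1}{\mu_g}\|\nabla_y g(x(0),y(0))\|\,e^{-\beta t},
\]
which is exactly the claimed bound.

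I do not anticipate a serious obstacle here; the proposition is essentially a one-line consequence of the deliberate design of \eqref{eq:dynamic-y}. The only points requiring minor care are: (i) confirming that solutions to \eqref{eq:dynamic-x-so} exist and are unique on $[0,\infty)$ so that $V(t)$ is differentiable along them — this follows from Lipschitz continuity of the right-hand side (invertibility of $\nabla_{yy}^2 g$ is guaranteed by $\mu_g$-strong convexity, and all gradients/Hessians are Lipschitz by Assumptions \ref{assumption:upperlevel}–\ref{assumption:lowerlevel}, as argued for the earlier dynamics in \Cref{rm:Lip}); and (ii) ensuring the chain-rule computation of $\dot V$ is legitimate, which again needs only $C^1$ regularity of $t\mapsto(x(t),y(t))$ and differentiability of $\nabla_y g$. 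No Grönwall argument or tracking-error analysis is needed because the cancellation is exact rather than approximate.
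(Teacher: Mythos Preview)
Your proposal is correct and is precisely the argument one expects here: the paper itself omits the proof, stating only that it ``follows a similar approach to \cite{fazlyab2017prediction},'' and your computation---showing that the prediction-correction dynamics \eqref{eq:dynamic-y} force $\tfrac{d}{dt}\nabla_y g = -\beta\nabla_y g$ exactly, then invoking $\mu_g$-strong convexity---is the natural route and matches that reference. There is nothing to add.
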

\begin{proof}
The proof follows a similar approach to \cite{fazlyab2017prediction}, and is omitted here for brevity.
\end{proof}

To analyze the convergence rate of the proposed ODE \eqref{eq:dynamic-x-so}, we define the following Lyapunov function,
\begin{align}\label{eq:Lyapanov}
    \mathcal{E}(t) &= \ell(x(t))-\ell(x^\star) + \frac{1}{2}\int_{0}^t \|\nabla \ell(x(\tau))\|^2 d\tau, 
\end{align}
which is the optimality residual for the single-level variant of problem \eqref{eq: bilevel task 1} evaluated along the trajectories $x(t)$. 

\begin{theorem} \label{thm:convergence-rate-pred-corr}
    Let $x(t)$ and $y(t)$ be the solutions of \eqref{eq:dynamic-x-so}. Suppose Assumption \ref{assumption:lowerlevel} holds. Then for any arbitrary initialization $(x(0),y(0))$, and  $\beta>0$, 
    \begin{align}
        \frac{1}{2t}&\int_{0}^t \|\nabla \ell(x(\tau))\|^2 d\tau\leq \frac{1}{t}\left(\ell(x(0))\!-\!\ell(x^\star)+\frac{M_1^2}{4\beta \mu_g^2}\|\nabla_y g(x(0),y(0))\|^2\right). \notag
    \end{align}
\end{theorem}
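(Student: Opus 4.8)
The plan is to differentiate the Lyapunov function $\mathcal{E}(t)$ in \eqref{eq:Lyapanov} along the trajectory of \eqref{eq:dynamic-x-so} and show that it is non-increasing, then integrate. First I would compute $\dot{\mathcal{E}}(t) = \nabla \ell(x(t))^\top \dot{x}(t) + \tfrac12 \|\nabla \ell(x(t))\|^2$. Since $\dot{x} = -F(x,y)$, the first term becomes $-\nabla \ell(x)^\top F(x,y)$, which I want to relate back to $\|\nabla\ell(x)\|^2$. Writing $F(x,y) = \nabla\ell(x) - (\nabla\ell(x) - F(x,y))$ gives
\begin{align}
\dot{\mathcal{E}}(t) = -\|\nabla\ell(x)\|^2 + \nabla\ell(x)^\top(\nabla\ell(x) - F(x,y)) + \tfrac12\|\nabla\ell(x)\|^2. \notag
\end{align}
Then I would bound the cross term with Cauchy--Schwarz and Young's inequality \eqref{eq:young'sIneq}, using \Cref{lem:error-grad-ell} to get $\|\nabla\ell(x) - F(x,y)\| \leq M_1\|y - y^\star(x)\|$, and then \Cref{lem:y-rate} to get $\|y - y^\star(x)\| \leq \tfrac{1}{\mu_g}\|\nabla_y g(x(0),y(0))\| e^{-\beta t}$.

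The key calculation is choosing the Young's inequality parameter so that the $\|\nabla\ell(x)\|^2$ terms cancel or stay non-positive. Applying $\nabla\ell(x)^\top(\nabla\ell(x) - F(x,y)) \leq \tfrac12\|\nabla\ell(x)\|^2 + \tfrac12\|\nabla\ell(x)-F(x,y)\|^2$ yields $\dot{\mathcal{E}}(t) \leq \tfrac12\|\nabla\ell(x) - F(x,y)\|^2 \leq \tfrac{M_1^2}{2\mu_g^2}\|\nabla_y g(x(0),y(0))\|^2 e^{-2\beta t}$. This is not non-positive, so strictly $\mathcal{E}$ is not a Lyapunov function in the classical sense; instead I would integrate this differential inequality. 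Integrating from $0$ to $t$ gives $\mathcal{E}(t) \leq \mathcal{E}(0) + \tfrac{M_1^2}{2\mu_g^2}\|\nabla_y g(x(0),y(0))\|^2 \int_0^t e^{-2\beta\tau}d\tau \leq \mathcal{E}(0) + \tfrac{M_1^2}{4\beta\mu_g^2}\|\nabla_y g(x(0),y(0))\|^2$, using $\mathcal{E}(0) = \ell(x(0)) - \ell(x^\star)$.

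Finally, since $\ell(x(t)) - \ell(x^\star) \geq 0$ by global optimality of $x^\star$, the definition of $\mathcal{E}(t)$ gives $\tfrac12\int_0^t \|\nabla\ell(x(\tau))\|^2 d\tau \leq \mathcal{E}(t) \leq \ell(x(0)) - \ell(x^\star) + \tfrac{M_1^2}{4\beta\mu_g^2}\|\nabla_y g(x(0),y(0))\|^2$, and dividing by $t$ produces the claimed bound. The main obstacle, or rather the main subtlety, is recognizing that $\dot{\mathcal{E}}(t)$ is not globally non-positive but is dominated by an exponentially decaying term whose integral is finite, so the argument is really a differential-inequality/integration argument rather than a pure descent argument; everything else is routine application of \Cref{lem:error-grad-ell}, \Cref{lem:y-rate}, Cauchy--Schwarz, and Young's inequality. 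One should also double-check that well-posedness of \eqref{eq:dynamic-x-so} (Lipschitz right-hand side) is in place so the trajectory exists on $[0,t]$, though this can be invoked from the regularity assumptions and the strong convexity of $g$.
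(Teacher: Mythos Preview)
Your proposal is correct and follows essentially the same route as the paper's proof: differentiate $\mathcal{E}(t)$, use $\dot{x}=-F(x,y)$ and Young's inequality to obtain $\dot{\mathcal{E}}(t)\leq \tfrac12\|\nabla\ell(x)-F(x,y)\|^2$, then apply \Cref{lem:error-grad-ell} and \Cref{lem:y-rate}, integrate the resulting exponential, and drop $\ell(x(t))-\ell(x^\star)\geq 0$. Your remark that $\mathcal{E}$ is not strictly non-increasing but is controlled by an integrable exponential term is exactly the mechanism the paper uses as well.
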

\begin{proof}
Taking the time derivative of the Lyapunov function \eqref{eq:Lyapanov} along the trajectories of \eqref{eq:dynamic-x-so}, we obtain
\begin{align}
    \dot{\mathcal{E}}(t) \! =
    \! \nabla \ell(x)^\top \dot{x}(t) + \tfrac{1}{2}\|\nabla \ell(x)\|^2 \! = \!
    -\nabla \ell(x(t))^\top (F(x(t),y(t)) \!- \! \nabla \ell(x)) \! - \! \tfrac{1}{2}\|\nabla \ell(x)\|^2. \notag
\end{align}
Now using Young's inequality, we have
$$
 \dot{\mathcal{E}}(t) \leq \tfrac{1}{2}\norm{F(x(t),y(t))-\nabla \ell(x(t))}^2 \nonumber.
$$
Then, by integrating both sides we obtain
\begin{align}
    \mathcal{E}(t) - \mathcal{E}(0)&\leq \int_{0}^t \tfrac{1}{2}\norm{F(x(\tau),y(\tau))-\nabla \ell(x(\tau))}^2 d\tau\nonumber\\
    &\leq \tfrac{M_1^2}{2} \int_{0}^t \norm{y(\tau)-y^\star(x(\tau))}^2 d\tau \nonumber\\
    &\leq \tfrac{M_1^2}{2\mu_g^2} \|\nabla_y g(x(0),y(0))\|^2  \int_{0}^t e^{-2\beta \tau} d\tau \nonumber\\
    &\leq \tfrac{M_1^2}{4\beta \mu_g^2} \|\nabla_y g(x(0),y(0))\|^2, \notag
\end{align}
where in the second inequality we used \Cref{lem:error-grad-ell} and the penultimate inequality follows from \Cref{lem:y-rate}. 
Finally, noting that $\ell(x(t)) \geq \ell(x^\star)$, 
 we conclude the result.
\end{proof}

\begin{remark}[Comparison with \cite{chen2022single}]
    In \cite{chen2022single}, the authors propose an ODE with the same upper-level dynamics as in \eqref{eq:dynamic-x}, but with slightly different lower-level dynamics, where a gradient correction step is used in place of Newton's correction step. However, the computational complexity remains identical as the prediction term already uses the Hessian inverse. 
    Furthermore, \cite{chen2022single} only establishes asymptotic convergence, while we use a different Lyapanouv function that establishes a convergence rate as stated in \Cref{thm:convergence-rate-pred-corr}.
\end{remark}

\begin{figure}[t]
    \centering
        \includegraphics[width=0.4\textwidth]{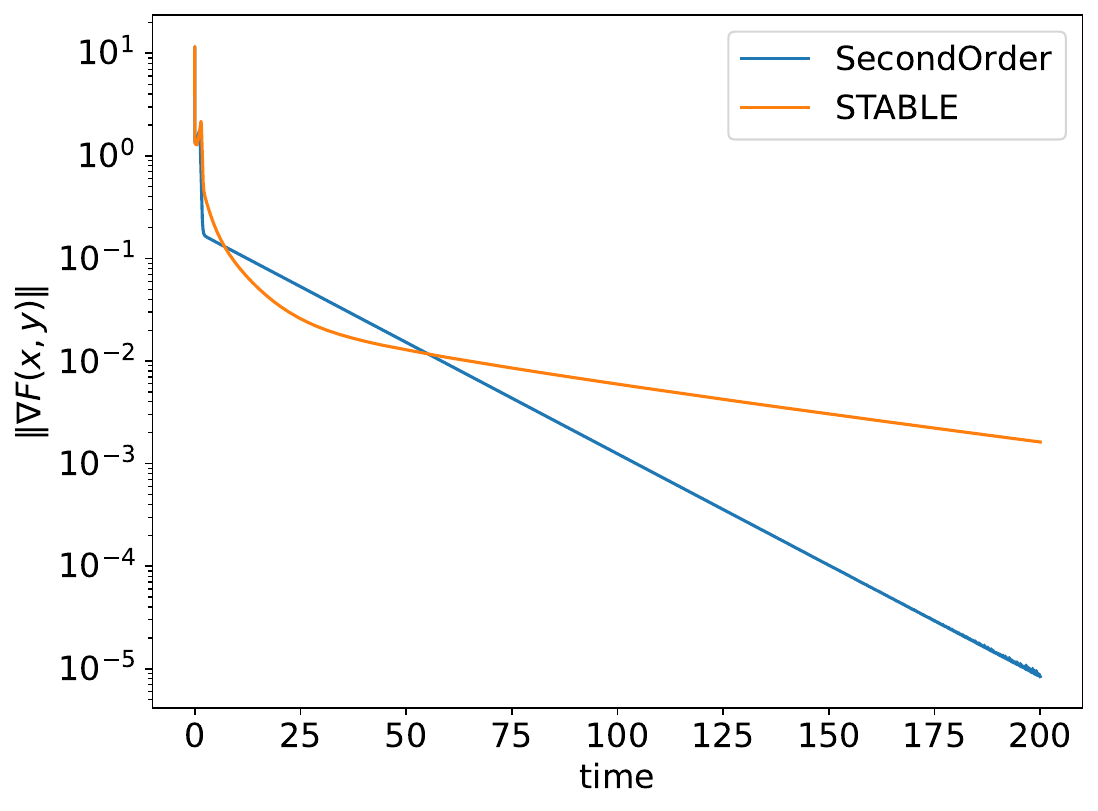}
    \caption{
    Comparison between our second order method and STABLE with $\alpha = 1$ and $\beta \in \{10^{-2}, 5\!\times\!10^{-2}, 10^{-1}, 5\!\times\!10^{-1} \}$.
    }
    \label{fig:stable}
\end{figure}

\Cref{fig:stable} is a comparison between our prediction-correction dynamics designed in \eqref{eq:dynamic-x-so} and continuous-time STABLE \cite{chen2022single}. 
    It can be seen that Newton's correction considerably speeds up the convergence compared to STABLE.

\printbibliography
\end{document}